\definecolor{purple}{rgb}{.5,0,1}
\definecolor{orange}{rgb}{1,.5,0}
\definecolor{pink}{rgb}{1,0,.5}
\tikzset{
  block/.style    = {draw, thick, rectangle, minimum height = 3em,
    minimum width = 3em},
  sum/.style      = {draw, thick, rectangle, minimum height = 3em,
    minimum width = 3em},
      input/.style    = {coordinate}, 
  output/.style   = {coordinate} }
\newcommand{\sumazero}{\Large$S_0$}
\newcommand{\sumaone}{\Large$S_1$}
\newcommand{\sumak}{\Large$S_k$}
\newcommand{\suman}{\Large$S_n$}
\newcommand{\sumai}{\Large$I$}\usepackage{textcomp}
\definecolor{auburn}{rgb}{0.43, 0.21, 0.1}\definecolor{ultramarine}{RGB}{0,70,10}
\definecolor{bluemunsell}{rgb}{0.0, 0.5, 0.69}
\definecolor{applegreen}{rgb}{0.55, 0.71, 0.0}
\definecolor{aquamarine}{rgb}{0.5, 1.0, 0.83}
\definecolor{purple}{rgb}{.5,0,1}
\definecolor{orange}{rgb}{1,.5,0}
\definecolor{pink}{rgb}{1,0,.5}
\numberwithin{equation}{section}
\newtheorem{thm}{Theorem}[section]
\newtheorem{lem}[thm]{Lemma}
\newtheorem{rem}[thm]{Remark}
\newtheorem*{theorem*}{Theorem}
\newtheorem*{lemma*}{Lemma}
\newtheorem*{remark*}{Remark}
\newcommand\C{\mathbb C}
\newcommand\beq{\begin{equation}}
\newcommand\eeq{\end{equation}}
\newcommand\be{\begin{equation}\begin{aligned}}
\newcommand\ee{\end{aligned}\end{equation}}
\newcommand{\abs}[1]{\left\lvert #1 \right\rvert}
\newcommand{\norm}[1]{\left\lVert #1 \right\rVert}
\newcommand{\pa}[1]{\left( #1 \right)}
\begin{document}

\title[A Perturbative Approach to the Analysis of Many-Compartment Models]{A Perturbative Approach to the Analysis of Many-Compartment Models Characterized by the Presence of Waning Immunity}

\author{Shoshana Elgart}
\address[S. Elgart]{Blacksburg, VA, 24060, USA}
 \email{shosha.elgart@gmail.com}



\begin{abstract} 
The waning of immunity after recovery or vaccination is a major factor accounting for the severity and prolonged duration of an array of epidemics, ranging from COVID-19 to diphtheria and pertussis. To study the effectiveness of different immunity level-based vaccination schemes in mitigating the impact of waning immunity, we construct epidemiological models that mimic the latter's effect. The total susceptible population is divided into an arbitrarily large number of discrete compartments with varying levels of disease immunity. We then vaccinate various compartments within this framework, comparing the value of $R_0$ and the equilibria locations for our systems to determine an optimal immunization scheme under natural constraints. Relying on perturbative analysis, we establish a number of results concerning the location, existence, and uniqueness of the system's endemic equilibria, as well as results on disease-free equilibria. In addition, we numerically simulate the dynamics associated with our model in the case of pertussis in Canada, fitting our model to available time-series data. Our analytical results are applicable to a wide range of systems composed of arbitrarily many ODEs.
\end{abstract}
%

\maketitle

\section{Introduction}\label{sec1} 

In this work, we introduce and analyze the behavior of many-compartment epide\-miological models depicting waning immunity boosted by an optimized immunization scheme. For a number of infectious diseases, including pertussis, increased levels of immune memory after vaccination or recovery are known to wane significantly with time. Controlling such epidemics in the long-term often requires the introduction of effective booster vaccination schemes, among additional public health measures.

The analysis of models developed through mathematical epidemiology has been used to inform policy in the public health sphere beginning with the 18th-century mathematician Daniel Bernoulli's smallpox inoculation model, whose results emphasized the necessity of variolation in controlling recurring smallpox epidemics (\cite{Bacaer}). Throughout the COVID-19 pandemic, infectious disease modeling has assumed a central role in the decision-making process of multiple government bodies (\cite{MB}; \cite{James}), informing the use of travel restrictions and the length of lockdowns, among multiple other mitigation strategies.

The SIR (susceptible-infected-recovered) model is the cornerstone of contemporary mean-field compartmental models, including those used in this work. Its origin can be traced back to the seminal work of \cite{Kermack1}. In particular, the SIR model is a reduced version of the general ODE age-of-infection model introduced in that paper, where transmissibility as well as recovery and death rates are fixed at constant values over the duration of an individual’s infection. The SIR model has played a crucial role in the field's later advances, as both deterministic and statistical depictions of infectious diseases are often constructed along the same principles as the SIR model. 

To briefly describe these principles, the SIR model divides a population affected by an epidemic into three compartments $S$, $I$, and $R$, consisting respectively of susceptible, infectious, and recovered individuals. Throughout the epidemic, susceptible individuals can come into contact with infectious individuals and contract the disease, while infectious individuals can recover after a given period, resulting in movement between compartments. This variation in compartment size over time can then be represented by a system of ordinary differential equations (ODEs). Demography is often introduced into the SIR model with the addition of a parameter representing the birth and death rates for each compartment. 

Multiple SIR-based models have been developed to depict the waning and boosting of immunity at both within-host and population scales. We will discuss a subset of these texts most relevant to the current paper (referring the reader to \cite{BRM} or similar articles for a comprehensive summary of both older and more modern immunity-based models).

Many models (see e.g., \cite{AG}, \cite{Arinaminpathy}, the delay differential equation model \cite{BRP}) set a constant number of total compartments: The work \cite{AG}, for instance, introduces a seven-compartment ODE COVID-19 model that represents waning immunity by differentiating between ``susceptible" (here referring to never infected), ``recovered", and ``susceptible but previously infected" individuals. Individuals in the second compartment are assumed to have full immunity, but waning immunity propels them into the third compartment, from where they can once again contract the disease. 

To enable comparison between these works and ours, we will refer to models which, like the above, can be characterized by a given constant number of compartments as \textit{few-compartment} models. 

In contrast, other models (see e.g., \cite{Lavine}, \cite{B}, \cite{HK}, \cite{C}) use an arbitrarily large number \textit{n} of compartments (in contrast to the few-compartment models previously discussed, we will refer to such models as \textit{many-compartment}). In \cite{Lavine}, the SIRS model is modified to contain a waning immunity compartment $W$ in addition to the recovered compartment $R$. Individuals in $R$ are fully or at least mostly immune, but individuals in $W$ are less immune and will eventually reach full susceptibility (joining compartment $S$ again). The work \cite{C} studies both a partial differential equation (PDE) and a discretized ODE model, depicting waning immunity and vaccination schemes for pertussis using five susceptible compartments through which individuals can pass through as their immunity wanes from full to minimal. The incorporation of multiple age classes, among other factors, brings both \cite{C} and \cite{Lavine} to the many-compartment class within our nomenclature. \cite{HK} includes an immuno-epidemiological model, depicting the individual within-host dynamics of an infectious disease in addition to the dynamics of the population itself. Here, the disease resistance of a population's susceptibles is differentiated via an arbitrary number of discrete sub-compartments, each varying in immune memory level, located inside a larger ODE SEIR model. The general model in \cite{B}, on the other hand, incorporates a continuous spectrum of immunity ranging from minimal to maximal, through which individuals are able to pass through with time. 

We now turn our attention to the ways in which these few and many-compartment models are analyzed. Many of the works considering few-compartment models determine the location and stability of both disease-free and endemic equilibria analytically, often supplementing rigorous analysis with a numerical simulation. (An example of this is the four-compartment work \cite{OSBPR}, which addresses each of the above for a SIRWS model, among other bifurcation-related results.) Disease-free equilibria are often studied using the next-generation matrix method, introduced in \cite{DW}, which yields the basic reproduction number $R_0$ for compartmental models satisfying a set of biologically-realistic assumptions. The many-compartment model \cite{B} is analyzed using combinations of multiple methods that determine location and stability for the DFE (among other results), and \cite{HK} studies endemic behavior through a numerical simulation. \cite{Lavine} also studies model dynamics numerically, though this work is not explicitly concerned with equilibria-related results. In \cite{C}, determining the location of the endemic equilibrium relies on prior knowledge about the steady state value for one of the age groups, which allows the analysis to be reduced to that of linear systems.

One crucial aspect of model analysis that should be noted at this stage is that the majority of nonlinear epidemiological model systems become linear when the interaction between healthy and infected individuals is removed. As a result, finding the location and determining the local stability of a system's DFE - where the population has no infectious individuals - is a linear problem that is usually solved with straightforward matrix analytical methods. On the other hand, since the original system is nonlinear, determining the precise locations of endemic equilibria is in general a much harder problem that cannot always be solved in closed form when the number of non-linearly interacting compartments is large. This complexity issue is noted in multiple works, including \cite{B}, whose integral equation-based approach cannot be applied to the endemic equilibrium. 

Waning immunity is a continuous process that, in the mean-field approximation, is most naturally modeled by non-linear PDEs. A somewhat less natural, though still realistic, depiction involves discretizing the waning immunity process into one with an arbitrarily large number of stages, which can be represented through many-compartment models such as those in \cite{B}. Due to the complexity issue described above, analyzing the detailed dynamical properties of the resulting nonlinear systems at the endemic equilibrium is a difficult problem that may also hinder numerical study for such systems, as many stable numerical approaches are derived from analytical work. 

Our main observation in the paper is that, in the context of waning immunity, one can often view a fully many-compartment model as a relatively weak perturbation of a few-compartment system (which is, however, still fully nonlinear). Thus, to address this problem, we devise a perturbative approach to yield explicit results for the location (and other characteristics) of the endemic equilibrium for such many-compartment model systems, and apply this approach to many-compartment models containing an discretized representation of immunity boosting and waning. These models are also adapted in Section \ref{Epidem} for the study of pertussis in Canada. 
 
We also aim to optimize the precise levels of susceptibility at which individuals should receive booster vaccinations. The rationale underlying the latter problem can be described as follows:  If there is significant vaccination coverage for individuals at sufficiently-high immune memory levels $\{k_i\}$, intervals between successive vaccinations will be short, which requires more supplies and may in a practical sense be impossible for the general population. On the other hand, allowing individuals to be vaccinated only at immunity levels $\{k_i\}$ that are low relative to some maximal immunity level means that individuals awaiting immunization become over time more and more susceptible to the virus, increasing their likelihood of contracting the disease in the meantime.

Our perturbation theory is supplemented with other analytical techniques, allowing us to bound the location of endemic equilibria for each of our model systems within small intervals and determine the equilibria's existence and uniqueness, while still retaining a relatively natural model of waning immunity. We also obtain a number of results pertaining to the local asymptotic stability of both endemic equilibria and the DFE. 

The remainder of the paper is organized as follows: In Section \ref{modform}, we describe the models used, introducing model structure and parameters. In Section \ref{secIII}, we analyze the location and asymptotic stability of infection-free and endemic equilibria. Our analytical results show that if vaccinations are restricted to individuals outside of the least-immune group, the value of $R_0$ for the system is independent of the rate of vaccination and the vaccine coverage. If we, however, enable vaccination for only the least-immune group at some coverage,  $R_0$ is strictly lowered, and moreover becomes a decreasing function of the vaccination rate and coverage. (For these results, we assume that the transmission rates for the most immune individuals are strictly lower than those for the most susceptible individuals.)

We also show that model's $R_0$ is an accurate indicator for the endemic behavior of the system if the rate of waning immunity is sufficiently small. Specifically, if $R_0 < 1$, no biologically-realistic endemic equilibrium exists for the system, and the sole realistic fixed point is the stable DFE. If $R_0 > 1$, however, a unique biologically-realistic endemic equilibrium exists, and is moreover locally asymptotically stable. 

These analytical results are followed-up by numerical results as detailed in Section \ref{Epidem}. We discuss the implications of  our findings in broader detail in Section \ref{sec7}. 

\section{Model Formulation}\label{modform}

\subsection{Description of Models and Notation}

In this section, we outline the model used (see \eqref{eq:3c} below for the corresponding model equations): The central aspect of this depiction of waning immunity involves splitting the $S$ compartment in the SIR model into (sub-)compartments $S_0,S_1,\ldots,S_n$ for some $n\in \mathbb N$, where each $S_k$ is characterized by a different level of susceptibility to the disease and thus subject to a different transmission rate $\beta_k$. This is done in order to depict a (reasonably realistic) gradual loss of immunity, while still retaining a model structure more amenable to analytical techniques than corresponding PDE models. 

While there could be many factors that can affect the immunity of a susceptible individual and the probability of the latter contracting the disease, on the mean-field level  immunity levels depend only on the time elapsed between the moment of recovery or immunization and present time. To simplify the analysis, it is assumed that the infection rates of just recovered and freshly immunized individuals are the same. As such, both categories are placed into the most-immune ($S_0$) compartment. 

As time progresses and immune memory erodes, an individual in a given susceptible compartment $S_k$ will move to the adjacent compartment $S_{k+1}$ for $k<n$ at the rate $\delta$. We note that, by construction, the transmission rates satisfy the relations $\beta_0\le \beta_1\le\ldots\le \beta_n$. 

We investigate the effect of immunizing a proportion of individuals in each susceptible compartment at compartment-dependent coverage values $\{p_i\}_{i = 0}^{n}$. (For all  $0 \le i \le n$, the proportion of the population vaccinated in a compartment $S_i$ is $p_{i}S_{i}$, where $0 \le p_{i} \le 1$. We set $p_0 = 0$, i.e., individuals with maximal immunity are not vaccinated. Immunized individuals are transferred to $S_0$ at a rate $\omega$ (which is assumed to be compartment-independent).  We will assume that newborns are born into the fully susceptible compartment $S_n$ (this assumption can be relaxed by suitably modifying the model's parameters). The following parameters are used (where all rates below are in the units of years$^{-1}$, the parameter values are nonnegative, the values of $\mu, r$ are positive, and $1/\delta$ is the amount of time in years required to pass from compartment $S_m$, $0 \le m < n$, to compartment $S_{m+1}$).
\\ \\ 

\begin{tabular}{
  |p{\dimexpr.17\linewidth-2\tabcolsep-1.3333\arrayrulewidth}
  |p{\dimexpr.8\linewidth-2\tabcolsep-1.3333\arrayrulewidth}|
  }
  \hline
   {\bfseries Parameters}     & {\bfseries Parameter Values}        \\ \hline
$\delta$ & The approximate rate of waning immunity  \\ \hline
$\beta_i$ &The transmission rate for the compartment $S_i$ \\ \hline
$r$ & The recovery rate for infected individuals\\ \hline
 $\mu$ & The birth and death rates, presumed to be equal 
\\ \hline
 $\omega$ & The vaccination rate for any compartment $S_k$ with nonzero coverage \\ \hline
  $p_i$ & The coverage rate for the compartment $S_i$ \\ \hline
\end{tabular}

\newpage

Then our primary ODE model system takes the form

\be\label{eq:3c}
\begin{aligned}
&\frac{dS_0}{dt} =  \omega \sum_{i=1}^{n}p_i
 S_i- \delta S_0 + rI - \beta_0 I S_0-\mu S_0; \\
&\frac{dS_i}{dt}  = -\omega p_iS_i+\delta ((1-p_{i-1})S_{i-1} - (1-p_i)S_i) - \beta_i I S_i-\mu S_i, \hspace{3pt} i\neq \{0,n\}; \\
&\frac{dS_n}{dt} = \mu -\omega p_nS_n+\delta(1-p_{n-1}) S_{n-1} - \beta_n I S_n-\mu S_n;\\
&\frac{dI}{dt} = I\sum_{i=0}^n \beta_i S_i  - rI-\mu I.
\end{aligned}
\ee
A schematic diagram of this model is provided in Figure \ref{fig}.
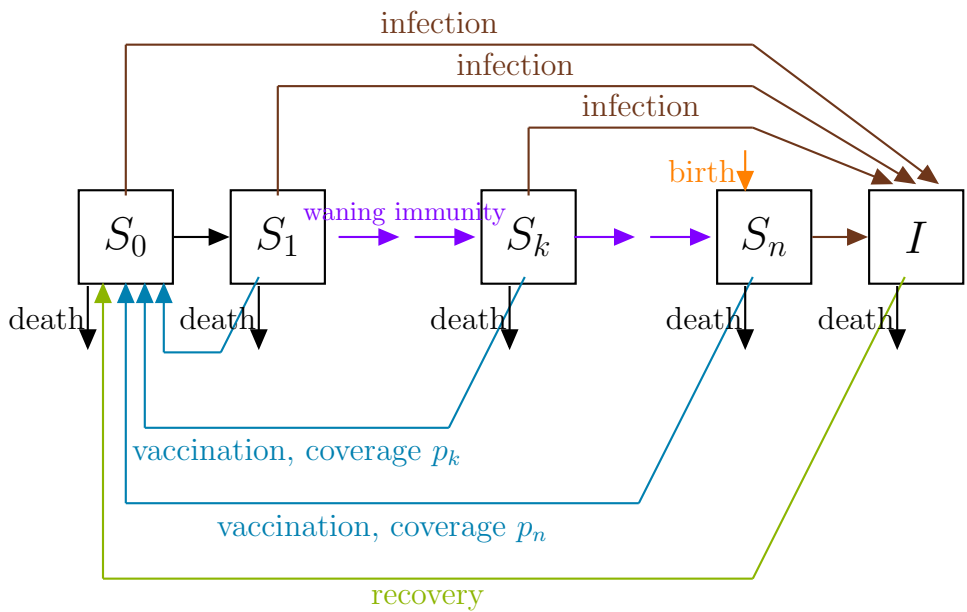
\begin{figure}[h!]
\centering
	\begin{tikzpicture}[auto, thick, node distance=2cm, >=triangle 45]
\draw
	node at  (0.6,-6) [sum, name=suma3] {\sumazero}
	node [block, right of=suma3] (inte2) {\sumaone};
\draw[->] (suma3) -- node {} (inte2);
\draw
	node at (5.9, -6) [sum, name=inte1] {\sumak}
	node at (9, -6) [sum, name=inte3] {\suman}
	node [block, right of=inte3] (Q2) {\sumai}
;
	\draw[->,auburn] (inte3) -- node {} (Q2);

		\draw[->,purple] (3.4,-6) -- node {} (4.2, -6);
		\draw[->,purple] (4.4,-6) -- node {\hspace{-34pt} \footnotesize waning immunity} (5.2, -6);
		\draw[->,purple] (6.5,-6) -- node {} (7.3, -6);
		\draw[->,purple] (7.5,-6) -- node {} (8.3, -6);
		\draw[-,applegreen] (10.85,-6.53) -- node {} (8.85, -10.53);
		\draw[-,applegreen](8.85, -10.53) -- node {recovery} (0.3, -10.53);
		\draw[->,applegreen](0.3, -10.53) -- node {} (0.3, -6.6);
		\draw[-,bluemunsell] (8.85,-6.53) -- node {} (7.35, -9.53);
		\draw[-,bluemunsell](7.35, -9.53) -- node {vaccination, coverage $p_n$} (0.6, -9.53);
		
		\draw[-,bluemunsell](4.85, -8.53) -- node {vaccination, coverage $p_k$} (0.85, -8.53);
		\draw[-,bluemunsell] (5.85,-6.53) -- node {} (4.85, -8.53);

		\draw[->,bluemunsell](0.6, -9.53) -- node {} (0.6, -6.6);
		
		\draw[->,bluemunsell](0.85, -8.53) -- node {} (0.85, -6.6);

		\draw[-,bluemunsell] (2.35,-6.53) -- node {} (1.85, -7.53);
		\draw[-,bluemunsell] (1.85, -7.53) -- node {} (1.1, -7.53);

		\draw[->,bluemunsell](1.1, -7.53) -- node {} (1.1, -6.6);
		
		\draw[-,auburn] (0.6,-5.45) -- node {} (0.6, -3.45);
		\draw[-,auburn](0.6, -3.45) -- node {infection} (8.85, -3.45);
		\draw[->,auburn](8.85, -3.45) -- node {} (11.3, -5.3);
		
		\draw[-,auburn] (2.6,-5.45) -- node {} (2.6, -4.0);
		\draw[-,auburn](2.6, -4.0) -- node {infection} (8.85, -4.0);
		\draw[->,auburn](8.85, -4.0) -- node {} (11, -5.3);
		
		\draw[-,auburn] (5.9,-5.45) -- node {} (5.9, -4.55);
		\draw[-,auburn](5.9, -4.55) -- node {infection} (8.85, -4.55);
		\draw[->,auburn](8.85, -4.55) -- node {} (10.7, -5.3);

		\draw[->,black](2.35, -6.65) -- node {\hspace{-34pt}death} (2.35, -7.50);
		\draw[->,black](0.1, -6.65) -- node {\hspace{-34pt}death} (0.1, -7.50);
		\draw[->,black](5.65, -6.65) -- node {\hspace{-34pt}death} (5.65, -7.50);
		\draw[->,black](8.75, -6.65) -- node {\hspace{-34pt}death} (8.75, -7.50);
		\draw[->,black](10.75, -6.65) -- node {\hspace{-34pt}death} (10.75, -7.50);
		\draw[->,orange](8.75, -4.85) -- node {\hspace{-34pt}birth} (8.75, -5.40);
\end{tikzpicture}
\caption{Symbolic representation of \eqref{eq:3c}}\label{fig}
\end{figure}

As introduced earlier, a central question in this paper has to do with determining the optimal choice for each $p_i$, resulting in a booster vaccination scheme that would not only minimize the system's $R_0$, driving the latter below $1$ for a realistic subset of parameter space and thus enabling local asymptotic stability for the DFE, but would also enable longer periods of time between successive immunizations. To address this question, in addition to studying the general properties of system \eqref{eq:3c}, we also compare two particular limiting cases of the above immunization scheme: In one, we set $p_n = 0$, while the remaining $p_i$ are allowed to vary (i.e., individuals in $S_n$ do not receive any vaccinations). In the other, individuals in the compartments $S_i, i < n$ are not immunized, so that $p_i = 0$ when $i < n$, and only $p_n$ is allowed to vary. 

In what follows, it will be convenient to introduce the following parameters:
\be\label{eq:param}
\omega_i:=p_i\omega,\quad  \delta_i:=(1-p_i)\delta.  
\ee

\paragraph{\bf Coverage of  all groups save $S_n$.}
Here, we allow for $\{p_i\}_{i=1}^{n-1}$ to be arbitrary but fix $p_n=0$ in \eqref{eq:3c}. This yields
\be\label{eq:3ca}
\begin{aligned}
&\frac{dS_0}{dt} =   \sum_{i=1}^{n-1}\omega_i
 S_i- \delta S_0 + rI - \beta_0 I S_0-\mu S_0; \\
&\frac{dS_i}{dt}  = -\omega_iS_i+\delta_{i-1}S_{i-1} - \delta_iS_i - \beta_i I S_i-\mu S_i, \hspace{3pt} i\neq \{0,n\}; \\
&\frac{dS_n}{dt} = \mu +\delta_{n-1} S_{n-1} - \beta_n I S_n-\mu S_n;\\
&\frac{dI}{dt} = I\sum_{i=0}^n \beta_i S_i  - rI-\mu I.
\end{aligned}
\ee
\paragraph{\bf Coverage of $S_n$.}
Here, we fix $p_i=0$ for $i<n$ but allow $p_n>0$. This yields
\be\label{eq:3aa}
\begin{aligned}
    \frac{dS_0}{dt} & = \omega_nS_n- \delta S_0 + rI - \beta_0 I S_0-\mu S_0; \\
    \frac{dS_i}{dt} &= \delta (S_{i-1} - S_i) - \beta_i I S_i-\mu S_i,\hspace{3pt} i\neq \{0,n\};\\
    \frac{dS_n}{dt} &= \mu-\omega_nS_n +\delta S_{n-1} - \beta_n I S_n-\mu S_n;\\
    \frac{dI}{dt} &=I\sum_{i=0}^n \beta_i S_i  - rI-\mu I.
\end{aligned}
\ee

We will  denote the disease-free equilibria values for $S_i, I$ by $S_{i, D}, I_{D}$ for all $0 \le i \le n$. As the disease is not present in the population at the DFE,  $I_{D} =0$. The endemic steady state solution for $S_i, I$ will be denoted by $S^{*}_{i}, I^{*}_{i}$. 
In addition, we will assume the normalization condition $\sum_{i}{S_i + I} = 1$, as in the standard density-dependent SIR model (we note that the $\sum_{i}{S_i + I}$ is conserved under the dynamics of \eqref{eq:3c}). We will further assume that $\beta_0<\beta_n$ (i.e., immunity wanes with time). By default, the vectors used will be row vectors. We will denote the standard basis for $\mathbb R^m$ by $\{{\bf e}_i\}$ (where $m$ will either be $n+1$ or $n+2$).

\section{Analytical Study of Models}\label{secIII}

 \subsection{Infection-Free Waning Immunity Dynamics}\label{sec2}

The first step of the analysis concerns the infection-free, no-vaccination dynamics obtained from \eqref{eq:3c} by setting $I(0)$ (and $p_i$ for all $i$) equal to 0. In this section, it is essentially verified that the model is capable of simulating the intended effect of waning immunity with reasonable accuracy (i.e., members of the $S_i$ compartment for each $i<n$ gradually drift into the $S_n$ compartment as time progresses). 

To this end, we prove the following result below: 

\begin{thm}\label{thm:inf_free}
For infection-free dynamics without vaccination (i.e., $I(0) = 0$, $p_i=0$ for all $i$) the system \eqref{eq:3c} has a unique equilibrium, given by $S_{i, D} = 0$ for $i\neq n$,  $S_{n, D} =1$. Moreover, it is globally exponentially stable, i.e., the transient solution reaches equilibrium exponentially fast for any initial state satisfying $I(0) = 0$.
\end{thm}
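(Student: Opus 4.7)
The plan is to reduce the problem to a linear constant-coefficient system and then exploit its lower-triangular structure. Since $I(0)=0$ and the evolution equation for $I$ factors as $\dot I = \bigl(\sum_i \beta_i S_i - r - \mu\bigr)\,I$, uniqueness of ODE solutions forces $I(t)\equiv 0$. Inserting this together with $p_i=0$ for all $i$ into \eqref{eq:3c} yields the $(n+1)$-dimensional linear inhomogeneous system
\[
\dot S \;=\; A\,S \;+\; \mu\,{\bf e}_n,
\]
where $S:=(S_0,\ldots,S_n)$ and $A$ is the lower bidiagonal matrix with sub-diagonal entries equal to $\delta$, diagonal entries $-(\delta+\mu)$ in rows $0,\ldots,n-1$, and diagonal entry $-\mu$ in row $n$.

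Next, I would identify the equilibrium by solving $A\,S_D = -\mu\,{\bf e}_n$ via forward substitution: row $0$ gives $(\delta+\mu)\,S_{0,D}=0$, so $S_{0,D}=0$; the relation $(\delta+\mu)\,S_{i,D}=\delta\,S_{i-1,D}$ for $1\le i\le n-1$ propagates $S_{i,D}=0$ inductively; and finally row $n$ reduces to $\mu\,S_{n,D}=\mu$, giving $S_{n,D}=1$. This simultaneously establishes the claimed equilibrium and its uniqueness within the invariant slice $\{I=0\}$.

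For global exponential stability I would change variables to $y:=S-S_D$, so that $\dot y = A\,y$, and observe that $A$ is lower triangular: its spectrum therefore equals the set of diagonal entries $\{-(\delta+\mu),-\mu\}$, all strictly negative because $\delta,\mu>0$. Standard linear ODE theory then yields $\|y(t)\|\le C\,e^{-\alpha t}\|y(0)\|$ for any $\alpha<\mu$, with $C=C(\alpha,n)$ independent of initial data, giving exponential convergence to $S_D$ from every initial condition in the slice $\{I=0\}$.

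The only mild complication is that $A$ is generally not diagonalizable: a direct check of the eigenvalue equation shows that $-(\delta+\mu)$ has algebraic multiplicity $n$ but only a one-dimensional eigenspace (spanned by ${\bf e}_{n-1}-{\bf e}_n$), producing a single Jordan block of size $n$. The resulting polynomial prefactors $t^k$ in $e^{tA}$ are harmless since they are dominated by $e^{-\alpha t}$ for any $\alpha<\min(\mu,\delta+\mu)=\mu$, so the exponential stability conclusion is unaffected.
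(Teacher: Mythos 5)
Your proof is correct and follows essentially the same route as the paper's: both reduce to a linear lower-triangular system after noting $I\equiv 0$, identify the equilibrium by forward substitution, read off the spectrum $\{-\delta-\mu,-\mu\}$ from the diagonal, and conclude exponential convergence at any rate below $\mu$. The paper's appendix packages the final step via variation of parameters and the eigenprojection onto the simple eigenvalue $-\mu$ (thereby sidestepping the Jordan structure of the $n$-fold eigenvalue $-\delta-\mu$, which you instead work out explicitly), but this is an equivalent presentation of the same spectral observation.
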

\vspace{-0.7cm}
\begin{proof}[Proof of Theorem \ref{thm:inf_free}]

The infection-free waning immunity dynamics are described by the solution of the initial value problem (IVP): 

\be\label{eq:2free}
\begin{aligned}
    &\frac{dS_0}{dt} = - \delta S_0 -\mu S_0; \\
    & \frac{dS_i}{dt} = \delta (S_{i-1} - S_i) -\mu S_i,\hspace{3pt}i\in\{1,\ldots,n-1\};\\
    & \frac{dS_n}{dt} = \mu +\delta S_{n-1}-\mu S_n;\\
    & S_i(0)=s_i,\hspace{3pt}  \sum_{i}s_i=1, s_i \ge 0,
\end{aligned}
\ee
where we note that, if $I(0) = 0$, by uniqueness of the solution for $I$, $I(t) = 0$ for all $t$. 

We note that the total population $S(t):=\sum_{i=0}^n S_i(t)$ is preserved under this dynamics, and that the non-negativity of the initial data $\{s_i\}$ ensures that $S_i(t)$ remains non-negative for all $t\in\mathbb{R}_+$. The solution of  \eqref{eq:2free} can be obtained explicitly in terms of  the associated Jacobian matrix ${\bf  J}$, 
 \[{\bf  J}= {\begin{pmatrix}\eta&0&0&\dots   &0&0\\\delta&\eta&0&\dots&0 &0  \\0&\delta&\eta&\dots &0&0  \\\vdots &\vdots&\vdots &\ddots&\vdots&\vdots  \\0 &0&0&\dots &\eta&0  \\0 &0 &0&\dots&\delta&-\mu \end{pmatrix}},\quad \eta:=-\delta-\mu,\]
using the variation of parameters method. The argument yielding the result is standard; we include it in Appendix \ref{sec:app} for completeness.
\end{proof}

\subsection{Disease-Free Equilibria: Location and Stability for Systems \eqref{eq:3c}, \eqref{eq:3ca}, and  \eqref{eq:3aa}}\label{sec3}

In this section, we prove the following result:

\vspace{-0.2cm}

\begin{thm}
\begin{enumerate}[label=(\alph*)]
\item[]
\item\label{it:th1} The DFE for \eqref{eq:3c}  is asymptotically stable if $\sum_{i=0}^n \beta_i S_{i, D} <r+\mu$ and unstable if $\sum_{i=0}^n \beta_i S_{i, D} >r+\mu$,  where $S_{i, D}$ are values uniquely determined by the configuration parameters and given explicitly in \eqref{eq:genS*}--\eqref{eq:genS*1}. 
We conclude that $R_0=\frac{\sum_{i=0}^n \beta_i S_{i, D}}{r+\mu}$   for \eqref{eq:3c}. 
\item\label{it:th2}
The DFE  and $R_0$ for \eqref{eq:3ca} are $w_i$-independent for all $i$. In fact, $R_0=\frac{\beta_n}{r+\mu}$ here.

\item\label{it:th3}
$R_0$ is strictly monotone decreasing in $\omega_n$ for \eqref{eq:3aa}. Moreover, for any  $\omega_n>0$, the value of $R_0$ for \eqref{eq:3aa} is strictly smaller than $\frac{\beta_n}{r+\mu}$, i.e., it is strictly smaller than the one for \eqref{eq:3ca}.

\item\label{it:th4} For sufficiently small $\delta$, the $R_0$ for \eqref{eq:3aa} becomes $\frac{\omega_{n} \beta_{0} + \mu \beta_{n}}{(\omega_{n} + \mu) (r + \mu)}$. In particular, the latter $R_0$ coincides with that of \eqref{eq:3ca} when $\omega_n=0$. 

\end{enumerate}
\end{thm}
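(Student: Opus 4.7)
The plan is to treat all four parts through the next-generation matrix framework applied to \eqref{eq:3c}: since $I$ is the sole infectious compartment, the new-infection and transition-out matrices reduce to scalars $F = \sum_i \beta_i S_{i,D}$ and $V = r+\mu$, so any DFE produces $R_0 = \sum_i \beta_i S_{i,D}/(r+\mu)$. The proof then reduces to two tasks: computing the DFE for each of the three systems, and verifying local asymptotic stability via a block-triangular decomposition of the Jacobian at the DFE.

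For \ref{it:th1}, setting $I=0$ and all time derivatives to zero in \eqref{eq:3c} gives a linear system for $(S_{0,D},\ldots,S_{n,D})$ whose coefficient matrix $\mathbf A$ has nonnegative off-diagonal entries, diagonal entries $-(\delta+\mu+\omega p_i)$ (using $p_0=0$ and with the obvious modifications for the endpoint rows), and column sums exactly $-\mu$. The matrix $-\mathbf A$ is therefore a strictly column-diagonally-dominant M-matrix, hence invertible with spectrum in the open right half-plane; equivalently, $\mathbf A$ has spectrum in the open left half-plane. Existence and uniqueness of the DFE follow once $\sum_i S_{i,D}=1$ is imposed, and the explicit formulas \eqref{eq:genS*}--\eqref{eq:genS*1} are recovered by solving the $S_i$-equations recursively from $i=0$ and fixing the free constant via the conservation relation. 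For stability, I would order variables as $(S_0,\dots,S_n,I)$ and observe that the full Jacobian at the DFE is block upper triangular, with the $(n+1)\times(n+1)$ block $\mathbf A$ on the diagonal and the scalar $\sum_i \beta_i S_{i,D} - (r+\mu)$ in the bottom-right corner, because $\partial_{S_j}(dI/dt)|_{\mathrm{DFE}}=\beta_j I|_{\mathrm{DFE}}=0$. Hence the spectrum is $\sigma(\mathbf A)\cup\{\sum_i\beta_i S_{i,D} - (r+\mu)\}$, and the stability dichotomy stated in \ref{it:th1} is immediate.

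For \ref{it:th2}, specializing to $p_n=0$ in the DFE system turns the $S_n$-equation into $\mu S_{n,D}=\mu+\delta(1-p_{n-1})S_{n-1,D}$, forcing $S_{n,D}\ge 1$, which together with $\sum_i S_{i,D}=1$ and non-negativity gives $S_{n,D}=1$ and then $S_{i,D}=0$ for $i<n$ by descending induction through the $S_i$-equations. Substituting into the formula from \ref{it:th1} yields $R_0=\beta_n/(r+\mu)$, independent of every $\omega_i$. For \ref{it:th3}, with $p_i=0$ for $i<n$ the middle equations collapse to the geometric recursion $S_{i,D}=\alpha\,S_{i-1,D}$ with $\alpha:=\delta/(\delta+\mu)$, so $S_{i,D}=\alpha^i S_{0,D}$ for $i\le n-1$, while the $S_0$-equation gives $S_{0,D}=\omega_n S_{n,D}/(\delta+\mu)$. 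Using $\sum_i S_{i,D}=1$ yields closed forms for $S_{0,D}$ and $S_{n,D}$ as rational functions of $u:=\omega_n$, and straightforward substitution produces
\[
\sum_{i=0}^n \beta_i S_{i,D} \;=\; \frac{\mu\bigl[u P(\alpha) + (\delta+\mu)\beta_n\bigr]}{(\delta+\mu)\bigl[u(1-\alpha^n)+\mu\bigr]},\qquad P(\alpha) := \sum_{i=0}^{n-1}\beta_i \alpha^i.
\]
This is a M\"obius map in $u$ of the form $(au+b)/(cu+d)$, whose derivative has the sign of $ad-bc = \mu(\delta+\mu)\bigl[\mu P(\alpha) - (1-\alpha^n)(\delta+\mu)\beta_n\bigr]$. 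The identity $1-\alpha^n = (\mu/(\delta+\mu))\sum_{i=0}^{n-1}\alpha^i$ rewrites the bracketed quantity as $\mu\sum_{i=0}^{n-1}(\beta_i-\beta_n)\alpha^i$, strictly negative under the standing assumption $\beta_0<\beta_n$ combined with $\beta_i\le\beta_n$. Hence $R_0$ is strictly decreasing in $\omega_n$, and evaluating at $\omega_n=0$ recovers $\beta_n/(r+\mu)$, giving the strict inequality in \ref{it:th3}.

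Finally, \ref{it:th4} follows by taking $\delta\to 0$ in the same rational expression: $\alpha\to 0$, $\alpha^n\to 0$, $\delta+\mu\to\mu$ and $P(\alpha)\to\beta_0$, so the limit simplifies to $(\omega_n\beta_0+\mu\beta_n)/(\omega_n+\mu)$, producing the stated formula for $R_0$; at $\omega_n=0$ it further collapses to $\beta_n/(r+\mu)$, matching \ref{it:th2}. The one genuinely nontrivial step in the argument is the stability analysis in \ref{it:th1}: the vaccination flow back into $S_0$ makes the $S$-block Jacobian non-bidirectional, which rules out a naive Gershgorin or diagonal-dominance argument applied row-wise; one must instead exploit the compartmental column-sum structure (equivalently, the $-\mathbf A$ is-an-M-matrix observation above) to conclude negative spectrum. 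Everything else is linear algebra and rational-function bookkeeping.
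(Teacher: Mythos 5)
Your proof is correct and follows essentially the same route as the paper: a block-triangular decomposition of the Jacobian at the DFE isolating the scalar eigenvalue $\sum_i\beta_iS_{i,D}-(r+\mu)$, a column-diagonal-dominance argument on the $S$-block (the paper phrases this via Ger{\v s}gorin discs, which are in fact computed column-wise and hence equivalent to your $-\mathbf{A}$-is-an-M-matrix observation), explicit resolution of the DFE linear system, and a M\"obius-transformation monotonicity argument for part (c). Your direct parametrization in $\omega_n$ and descending-induction derivation of the DFE for \eqref{eq:3ca} are cosmetic variants of, respectively, the paper's change of variable $\xi=\omega_n/(\omega_n+\mu)$ and its substitution of $\omega_n=0$ into the general formula \eqref{eq:genS*}--\eqref{eq:genS*1}.
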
\label{thm:DFE_ka}

We begin by establishing values for the disease-free equilibria for both primary model equations.

\subsubsection{Location of Disease-Free Equilibria.}
We start with general facts concerning the DFE associated with  \eqref{eq:3c}. As $I_{D}= 0$, we obtain that the vector \[{\bf S}_{D}=(S_{0, D}, S_{1, D},\ldots, S_{n, D})\] must satisfy the linear matrix equation
\be\label{eq:matrforDFe}
{\bf A}{\bf S}_D^\intercal=-\mu {\bf e}_n^\intercal, \quad {\bf A}:= {\begin{pmatrix}d_0(0)&\omega_1&\dots &\omega_{k-1}& \omega_k & \dots &\omega_n\\\delta&d_1(0)&\dots&0 &0&\dots &0  \\0&\delta_1&\dots &0  &0& \dots &0 &  \\\vdots &\vdots &\ddots&\vdots&\vdots&\ddots &\vdots \\0 &0&\dots &\delta_{k-1}&d_k(0) & \dots &0  \\\vdots &\vdots&\ddots&\vdots&\vdots&\ddots &\vdots  \\0 &0&\dots&0&0& \dots&d_n(0)\end{pmatrix}},
\ee
where 
\be\label{eq:dexpress}
\begin{aligned} d_i(I)&:=-(\delta_i+\omega_i+\mu+\beta_iI)\mbox{ for } i\notin\{0,n\};\\ 
d_0(I)&:=-(\delta+\mu+\beta_0I);\ d_n(I):=-(\omega_n+\mu+\beta_n I).\end{aligned}
\ee
We first observe that ${\bf A}$ can be decomposed as ${\bf A}=\hat {\bf A} +{\bf e}_1^\intercal{\boldsymbol \omega}$, with the lower-triangular matrix $\hat {\bf A}$ and  ${\boldsymbol \omega}=(\omega_0,\ldots,\omega_n)$.
Using the matrix determinant lemma, we get
\[\det{{\bf A}}=(\det{\hat {\bf A}})\pa{1+{\boldsymbol \omega}\hat {\bf A}^{-1} {\bf e}_1^\intercal}.\]
The vector ${\bf u}^\intercal=\hat {\bf A}^{-1} {\bf e}_1^\intercal$, ${\bf u} = (u_1, u_2, \ldots, u_n, u_{n+1})$ can be evaluated explicitly using Cramer's rule:
\[
u_{k+1}= -\frac1{\abs{d_k(0)}}\prod_{i=0}^{k-1}\frac{\delta_i}{\abs{d_i(0)}}, 0 \le k \le n 
\]
with the convention that the empty product is equal to one.  This yields
\be\label{eq:detA}
\det{{\bf A}}=\pa{\prod_{i=0}^{n}{d_i(0)}} \pa{1-\sum_{k=0}^n\frac{\omega_k}{\abs{d_k(0)}}\prod_{i=0}^{k-1}\frac{\delta_i}{\abs{d_i(0)}}}.
\ee
In particular, ${\bf A}$ is always invertible.

We next note that the vector ${\bf v}=(1,\ldots,1)$ in $\mathbb R^{n + 1}$ is the left eigenvector for ${\bf A}$ with the eigenvalue $-\mu$. This implies the (expected) normalization condition 
\[\sum_{k=1}^n S_{k, D} ={\bf v} {\bf S}_{D}^\intercal=-\mu {\bf v}(A^{-1}{\bf e}_n^\intercal)={\bf v} {\bf e}_n^\intercal=1.\]

Using Cramer's rule,  the unique solution of  \eqref{eq:matrforDFe} is of the form
\be\label{eq:genS*}
S_{k, D} = \begin{cases} c\pa{\prod_{i=0}^{k-1}\delta_i}\pa{\prod_{i=k+1}^{n-1}\abs{d_i(0)}}& k<n\\  \frac\mu{\abs{d_n(0)}}+\frac c{\abs{d_n(0)}}\pa{\prod_{i=0}^{n-1}\delta_i} &k=n\end{cases}
\ee
for some constant $c$. This can be verified by directly substituting above expressions for $S_{k, D}$ into the system \eqref{eq:matrforDFe} and checking that all equations except the first one are satisfied one by one. The value of $c$  can be determined either from the first equation or by using the normalization condition $\sum_{k=1}^n S_{k, D}=1$. Since this sum is equal to $\frac\mu{\abs{d_n(0)}}\le 1$ for $c=0$ and is a monotonically increasing function of $c$, this value is unique and is non-negative. In fact, the explicit value of $c$ is given by 
\be\label{eq:genS*1}
c=\frac{\omega_n\mu}{\abs{\det{{\bf A}}}}.
\ee
When $\delta=0$, one gets
\be
\label{eq:EEdel}
\begin{aligned}
  & S_{0, D} =\frac{\omega_n }{\omega_n+\mu};\\
    & S_{i, D} =0,\quad i <n;\\
    &S_{n, D} =\frac{\mu }{\omega_n+\mu}.
\end{aligned}
\ee
In particular, for $\delta=0$, we have 
\be
\label{eq:expSa}
\sum_{i=0}^n\beta_i S_{i, D} =\frac{\omega_n \beta_0+\mu \beta_n }{\omega_n+\mu},
\ee
which will later be used to yield Theorem \ref{thm:DFE_ka}\ref{it:th4}.

We now consider the DFEs for specific values of the $p_i$s. The DFE for \eqref{eq:3ca} is obtained from \eqref{eq:genS*}--\eqref{eq:genS*1} by setting $\omega_n=0$, yielding
\be\label{DEFk} 
I_{D} = 0,\quad S_{i, D} =\begin{cases}0,& i<n,\\ 1,& i=n,
\end{cases}.
\ee
Here, the DFE does not depend on the value of the vaccination parameter $\omega$ and coverages $p_i$. As shown later,  $R_0$ for this system exhibits the same behavior.

To compute the DFE for \eqref{eq:3aa}, we note that in this case \eqref{eq:detA} gives 
\[\det{{\bf A}}=(-1)^{n+1}\nu(\omega_n+\mu)(\delta+\mu)^n, \quad \nu:=1-\pa{\frac{\delta }{\delta+\mu}}^{n}\frac{\omega_n}{\omega_n+\mu}.\] Substituting it into \eqref{eq:genS*}--\eqref{eq:genS*1} and setting $\delta_i=\delta$, $\omega_i=0$ for $i \le n-1$, we obtain

\be\label{DEFsn} S_{k, D} =\begin{cases}\frac1\nu\frac{\mu}{\delta+\mu}\frac{\omega_n}{\omega_n+\mu}\pa{\frac{\delta }{\delta+\mu}}^{k}&k<n\\ \frac1\nu\frac{ \mu }{\omega_n+\mu} & k=n\end{cases}.
\ee

In this case, we note that the DFE does depend on $\omega_n$, a fact that is occasionally stressed by writing $S_{i, D} (\omega_n)$ instead of $S_{i, D}$. In particular, when $\omega_n=0$ (no coverage), $S_{i, D}(0)$ coincides with solution \eqref{DEFk}. In the other extreme case, we have

\be\label{eq:expS}
\begin{aligned}
    & \lim_{\omega_n\to\infty} S_{k, D} (\omega_n)=\pa{\frac{\delta }{\delta+\mu}}^k\frac{\mu}{\hat\nu(\delta+\mu)} ,\quad k <n;\\
    & \lim_{\omega_n\to\infty} S_{n, D} (\omega_n) =0,
\end{aligned}
\ee
where $\hat \nu=1-\pa{\frac{\delta }{\delta+\mu}}^{n}$.

\subsubsection{Linear Stability Analysis for Disease-Free Equilibria.}
Here we continue to prove Theorem \ref{thm:DFE_ka}.

The Jacobian  ${\bf  J}$ for \eqref{eq:3c} is given by 
 \be\label{eq:2aa} 
 {\bf  J}=
{\begin{pmatrix}d_0(I)&\omega_1&\dots   &\omega_{k-1}&\omega_k&\dots &\omega_{n-1}&\omega_{n}&r-\beta_0 S_o\\\delta&d_1(I)&\dots&0&0 &\dots &0 &0&-\beta_1 S_1 \\0&\delta_1&\dots &0&0 &\dots&0 &0 &-\beta_2 S_2  \\\vdots &\vdots &\ddots&\vdots&\vdots&\ddots&\vdots &\vdots&\vdots  \\0 &0&\dots & \delta_{k-1}&d_{k}(I)&\dots&0&0 &-\beta_{k} S_{k} \\\vdots &\vdots &\ddots&\vdots&\vdots&\ddots &\vdots  &\vdots &\vdots  \\0 &0 &\dots&0&0&\dots&d_{n-1}(I)&0 &-\beta_{n-1} S_{n-1}\\0 &0 &\dots&0&0&\dots&\delta_{n-1}&d_n(I) &-\beta_n S_n\\\beta_0I&\beta_1I&\dots   &\beta_{k-1}I&\beta_kI&\dots&\beta_{n-1}I&\beta_{n}I&-r-\mu+B \end{pmatrix}},\ee
with $B:=\sum_{i=0}^n \beta_i S_i$. For the DFE, this matrix is reduced to 
 \[{\bf J}= {\begin{pmatrix}d_0(0)&\omega_1&\dots &\omega_{k-1}& \omega_k & \dots &\omega_n&r-\beta_0 S_{0, D} \\\delta&d_1(0)&\dots&0 &0&\dots &0 &-\beta_1 S_{1, D} \\0&\delta_1&\dots &0  &0& \dots &0 & -\beta_2 S_{2, D} \\\vdots &\vdots &\ddots&\vdots&\vdots&\ddots &\vdots&\vdots  \\0 &0&\dots &\delta_{k-1}&d_k(0) & \dots &0 &-\beta_k S_{k, D} \\\vdots &\vdots&\ddots&\vdots&\vdots&\ddots &\vdots & \vdots \\0 &0&\dots&0&0& \dots&d_n(0)&-\beta_n S_{n, D} \\0&0&\dots &0  &0&\dots &0&B_D -r-\mu \end{pmatrix}},\]
  where $B_D:=\sum_{i=0}^n \beta_i S_{i, D}$.

We now perform Laplace expansion of the determinant by the last row to evaluate the characteristic polynomial $p(z)=\det\pa{{\bf J}-z\mathds{1}}$ of ${\bf J}$. This  yields 
\be \label{eq:p(z)}
p(z)=\pa{-r-\mu+\sum_{i=0}^n \beta_i S^*_i-z} \det({\bf M}-zI), 
\ee
where 
\be\label{eq:Gmat}
{\bf M}=\begin{pmatrix}d_0(0)&\omega_1&\omega_2&\dots &\omega_{k-1}& \omega_k & \dots &\omega_n\\\delta&d_1(0)&0&\dots&0 &0&\dots &0 \\0&\delta_1&d_2(0)&\dots &0  &0& \dots &0 \\\vdots &\vdots&\vdots &\ddots&\vdots&\vdots&\ddots &\vdots \\0 &0&0&\dots &d_{k-1}(0)&0 & \dots&0 \\0 &0&0&\dots &\delta_{k-1}&d_k(0) & \dots&0 \\\vdots &\vdots&\vdots &\ddots&\vdots&\vdots&\ddots &\vdots \\0 &0&0&\dots&0&0& \dots&d_n(0)\end{pmatrix}.
\ee
In particular, by the Hartman-Grobman theorem (see e.g., \cite[Theorem 9.9]{T}), the system is asymptotically stable  if 
\begin{enumerate}
\item[]
\begin{enumerate}
\item $-r-\mu+\sum_{i=0}^n \beta_i S_{i, D} <0$.
\item $\sigma({\bf  M})\subset \mathbb P_L$, where $\mathbb P_L$ is the left  halfplane, 
$\mathbb P_L = {(-\infty,0)} \times \mathbb R$.
\end{enumerate} 
\end{enumerate} 
This is consistent with the next-generation matrix approach, e.g., \cite[Theorem 2]{DW}, where the matrices $F = \sum_{i=0}^n \beta_i S_{i, D}$ and $V = \mu + r$ in our context, so that the next-generation matrix is equal to $(r+\mu)^{-1}\sum_{i=0}^n \beta_i S_{i, D}$. Here \cite[Assumption (A5)]{DW} corresponds to the condition (b) above.

To show that $\sigma({\bf M})\subset \mathbb P_L$, we use the Ger{\v{s}}gorin circle theorem: Letting $D_r(a)$ denote a closed disc in the complex plane centered at $a\in\mathbb C$ with radius $r$, the Ger{\v{s}}gorin discs for an $(n+1)\times (n+1)$ matrix ${\bf M}$ are defined by  $\{D_{r_i}({\bf M}_{ii})\}_{i=1}^{n+1}$ with the radii $r_i=\sum_{j\neq i}\abs{{\bf M}_{ij}}$.  The Ger{\v{s}}gorin circle theorem, see \cite{HJ}, stipulates that the spectrum $\sigma({\bf M})$ of ${\bf M}$ is located in the union of these discs, $\sigma({\bf M})\subset \cup_{i=1}^{n+1}D_{r_i}({\bf M}_{ii})$.

For the matrix ${\bf  M}$ in \eqref{eq:Gmat}, we have ${\bf M}_{ii}=-(\omega_i+\delta_i+\mu)$, while $r_i=\omega_i+\delta_i$ (with convention $\omega_0=\delta_n=0$). In particular, $\cup_{i=1}^{n+1}D_{r_i}({\bf M}_{ii})$ is contained in the left half plane $\{z\in \C: \ Re(z)\le-\mu\}$. Hence the Ger{\v{s}}gorin circle theorem implies that all but one of the eigenvalues of $V$ have real part $\le-\mu$. On the other hand, the remaining eigenvalue of ${\bf J}$ is $-r-\mu+\sum_{i=0}^{n+1} \beta_i S_{i, D}$ (as can be seen from the expression for $p(z)$ in \eqref{eq:p(z)}).

We see that Theorem \ref{thm:DFE_ka}\ref{it:th1} holds for \eqref{eq:3c} and its cases by the above analysis. Substituting the value of the DFE for model \eqref{eq:3ca} in \eqref{DEFk}, we obtain that $-r - \mu + \sum_{i=0}^{n+1} \beta_i S_{i, D} = - r - \mu + \beta_n$, and Theorem \ref{thm:DFE_ka}\ref{it:th2} follows.

We claim that the threshold for the stability of the DFE for  \eqref{eq:3aa}, namely $T:=\sum_{i=0}^n \beta_i S_{i, D}$, monotonically decreases as $\omega_n \ge 0$ increases. Indeed, expressing $T$ as a function of $\xi:=\frac{\omega_n}{\omega_n+\mu}$, denoting $\sigma:=\frac{\delta}{{\delta+\mu}}$, and using $\omega_n=\frac{\mu \xi}{1-\xi}$, we compute
\[T=\frac{A\xi}{1-\sigma^n \xi}+\frac{\beta_n\pa{1-\xi}}{1-\sigma^n \xi},\]
with
\[A=\frac\mu{\pa{\delta+\mu}}\sum_{i=0}^{n-1}\beta_i\sigma^i.\]

Rearranging, we get
\be\label{eq:T}T=T_o+\frac{\pa{A-\beta_n}\sigma^{-n}+\beta_n}{1-\sigma^n \xi},\ee
where $T_o$ is $\xi$-independent. Using $\beta_0 \le \beta_1 \le \ldots \le \beta_n$, we can estimate
\[\pa{A-\beta_n}\sigma^{-n}+\beta_n\le \pa{\frac\mu{{\delta+\mu}}\sum_{i=0}^{n-1}\pa{\sigma^i}-1}\sigma^{-n}\beta_n+\beta_n=0,\]
so the numerator in \eqref{eq:T} is non-positive (and is in fact strictly negative if $\beta_0<\beta_n$). When $\omega_n \ge 0$, $\xi$ is monotone increasing in $\omega_n$, so we deduce that $1-\sigma^n \xi > 0$ decreases as $\omega_n$ increases, which in turn implies that $T$ is monotone decreasing in $\omega_n$ for a nonnegative $\omega_n$, as claimed. 

We note that if $\beta_n<r+\mu$, then we have that
\[-r-\mu+\sum_{i=0}^n \beta_i S_{i, D} < -r-\mu+\beta_n\sum_{i=0}^n  S_{i, D} =\beta_n-r-\mu<0,\] where we have used the fact that $\beta_0<\ldots<\beta_n$. Thus the DFE for \eqref{eq:3aa} is asymptotically stable if $\beta_n<r+\mu$, and Theorem \ref{thm:DFE_ka}\ref{it:th3} follows. 

Finally,  Theorem \ref{thm:DFE_ka}\ref{it:th4} has been already established earlier, see \eqref{eq:expSa} above.
\qed

\begin{rem}
When $\omega\to\infty$, we have an asymptotically stable DFE for \eqref{eq:3aa} provided that \[\sum_{i=0}^{n-1}\beta_i\pa{\frac{\delta }{\delta+\mu}}^i\frac{\mu}{\nu(\delta+\mu)}<r+\mu,\]
where $\nu$ is as defined above.  
For $\delta \ll 1$, this reduces to the condition $\beta_0<r+\mu$ (cf. with  $\beta_{n}<r+\mu$ for \eqref{eq:3ca}). 

\end{rem}

\subsection{Endemic Equilibria: Location, Existence, and Uniqueness for \eqref{eq:3ca} and  \eqref{eq:3aa}}\label{sec4}
We begin by considering the location and uniqueness of the endemic equilibria for each model system. To this end, we will prove the following  result:

\vspace{-.2cm}

\begin{thm}\label{thm:1}
There exists $\delta_o>0$ such that for $\delta\le\delta_o$, if \[\pa{\omega_n+\mu}\pa{\mu+r}>{\beta_0\omega_n+\beta_n\mu},\] then \eqref{eq:3c} has no realistic endemic equilibrium and  if \[\pa{\omega_n+\mu}\pa{\mu+r}<{\beta_0\omega_n+\beta_n\mu},\] then \eqref{eq:3c} has exactly one endemic equilibrium.  The value for $I^*$ has to lie in the intervals $ J_{1,2}$ described in Lemma \ref{lem:1} below. 
\end{thm}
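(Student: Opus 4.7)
The approach is perturbative in $\delta$: analyze the endemic equations of \eqref{eq:3c} at $\delta = 0$, where they collapse to a scalar problem, and then propagate the conclusion to $\delta \in [0, \delta_o]$ using the localization of $I^*$ provided by Lemma \ref{lem:1}.

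Setting every time derivative in \eqref{eq:3c} to zero with $I^* > 0$ and $\delta = 0$ makes the equation for each $0 < i < n$ collapse to $(\omega_i + \mu + \beta_i I^*) S_i^* = 0$, forcing $S_i^* = 0$ in the intermediate compartments. The $S_0$-balance then reduces to $\omega_n S_n^* + r I^* = (\beta_0 I^* + \mu) S_0^*$ (since $\omega \sum_{i \ge 1} p_i S_i^* = \omega_n S_n^*$), the $S_n$-equation gives $S_n^* = \mu/(\omega_n + \mu + \beta_n I^*)$, and the $I$-equation yields the infection constraint $\beta_0 S_0^* + \beta_n S_n^* = r + \mu$. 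Combining these with conservation $S_0^* + S_n^* + I^* = 1$ reduces the full system to the scalar equation $L(I) = R(I)$, where
\[
L(I) := r + \mu - \beta_0 + \beta_0 I, \qquad R(I) := \frac{\mu(\beta_n - \beta_0)}{\omega_n + \mu + \beta_n I}.
\]
The standing assumption $\beta_0 < \beta_n$ makes $L$ strictly increasing and $R$ strictly decreasing on $I \ge 0$, so a solution is unique if it exists. A direct calculation gives $L(0) < R(0) \Leftrightarrow (\omega_n+\mu)(\mu+r) < \beta_0\omega_n + \beta_n\mu$, while $L(1) > R(1)$ unconditionally (the inequality reduces to $(r+\mu)(\omega_n+\mu) + r\beta_n + \mu\beta_0 > 0$). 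By the intermediate value theorem this yields exactly the claimed dichotomy at $\delta = 0$: no endemic root when $(\omega_n+\mu)(\mu+r) > \beta_0\omega_n + \beta_n\mu$, and a unique root in $(0, 1)$ otherwise. Positivity of $S_0^*$ and $S_n^*$ at this root is read off from the explicit formulas, so the equilibrium is biologically realistic.

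For $\delta > 0$, the linear subsystem for the susceptibles at fixed $I$ is still triangular after the standard rearrangement and admits explicit rational solutions $S_i^*(I, \delta)$ that are jointly continuous in $(I, \delta)$. Endemic equilibria correspond to zeros of $F(I, \delta) := \sum_{i=0}^{n} \beta_i S_i^*(I, \delta) - (r + \mu)$. By Lemma \ref{lem:1}, any candidate $I^*$ must lie in $J_1 \cup J_2$, two intervals that at $\delta = 0$ localize around the two roots of the $\delta = 0$ quadratic analyzed above. Propagating the sign information from $\delta = 0$ to small $\delta$ via continuity of $F$, and matching sign changes at the endpoints of $J_1$ and $J_2$, will deliver existence or non-existence in each interval.

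The main obstacle is uniqueness for $\delta > 0$: the monotone-intersection argument at $\delta = 0$ relied on the intermediate $S_i^*$ being zero, and this structure is lost as soon as $\delta > 0$ repopulates the intermediate compartments. To rule out spurious roots inside $J_1 \cup J_2$, I would establish $\partial_I F(I, 0) < 0$ at the $\delta = 0$ endemic root (the derivative form of $L'(I) - R'(I) > 0$) and then use continuity of $\partial_I F$ in $\delta$ to extend strict monotonicity throughout $J_1 \cup J_2$ for all $\delta \le \delta_o$, with $\delta_o$ chosen precisely to preserve both this sign of $\partial_I F$ and the sign changes of $F$ at the endpoints of $J_1, J_2$. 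Combined with the preceding boundary sign analysis, this yields at most one zero in each interval and hence the full uniqueness and non-existence statement of the theorem.
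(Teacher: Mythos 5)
Your proposal is essentially correct and follows the same broad strategy as the paper (perturb off $\delta = 0$, localize $I^*$ via Lemma \ref{lem:1}, then force local uniqueness by derivative control), but differs in two places in a genuinely interesting way. First, at $\delta = 0$ you recast the endemic constraint as $L(I) = R(I)$ with $L$ affine increasing and $R$ rational decreasing, which makes the existence/non-existence dichotomy immediate from the boundary signs $L(0)$ vs.\ $R(0)$ and $L(1)$ vs.\ $R(1)$; the paper instead clears denominators in $r + \mu - F_0$ to get the monic quadratic $Q$ in \eqref{eq:cden} and reads the dichotomy off the signs of $Q(0)$ and $Q(1)$. These are algebraically the same (clearing denominators in $L = R$ recovers $Q = 0$, and your boundary inequalities match the paper's computations), but your monotone-intersection framing is cleaner and makes the ``at most one root'' conclusion self-evident rather than requiring the sign analysis of the coefficients $a, b$ in \eqref{eq:ab}. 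Second, for uniqueness when $\delta > 0$ the paper rewrites the fixed-point equation as $u = G(u)$ and applies Banach's contraction theorem on a shifted copy of $J_i$, with the explicit quantitative separation hypothesis $|y_1 - y_2| \ge \delta^{1/3}$; you propose instead to bound $\partial_I F_\delta$ away from zero on $J_2$ by transferring the sign of $\partial_I F_0(y_2)$ and then invoke strict monotonicity and IVT. The two routes are logically equivalent and lean on the same underlying estimate, $\|\partial_I(\mathbf{A}_\delta^{-1} - \mathbf{A}_0^{-1})\| = O(\delta)$, which the paper derives from the resolvent identity in \eqref{eq:prrul}--\eqref{eqAderv}; the paper's contraction route has the advantage of producing the explicit $\delta^{1/3}$ threshold, whereas yours is more elementary but less quantitative. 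Two details you gloss over that the paper treats carefully: (i) $\mathbf{A}_\delta$ is not triangular for $\delta > 0$ (it carries both the first-row $\omega_i$'s and the subdiagonal $\delta_i$'s), so invertibility uniformly in $I \in [0,1]$ is not automatic --- the paper gets it from the rank-one-update determinant formula \eqref{eq:detA} and the operator-norm perturbation bound \eqref{eq:difnapr}; (ii) the degenerate case $\beta_0 = 0$, for which your $L$ is constant rather than strictly increasing (this still works since $R$ alone is strictly decreasing, but the paper's $Q$ degenerates to a linear polynomial and is handled separately in Remark \ref{b0=0}). Neither is a genuine gap --- both are fillable with the same computations the paper performs --- so your proof is sound, just slightly less explicit about the operator-theoretic bookkeeping that makes the $O(\delta)$ estimates rigorous.
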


\begin{rem}
We note that for system \eqref{eq:3ca} $\omega_n=0$, in which case the conditions above become $r+\mu>\beta_n$ and $r+\mu<\beta_n$, respectively. \end{rem}

\textit{Proof of Theorem \ref{thm:1}.}
It is first noted that
\be\label{eq:endco}
\begin{aligned}
& I^*+\sum_{i=0}^nS_i^*= 1;\\
& \sum_{i=0}^n \beta_i S^*_i =r+\mu
\end{aligned}
\ee 
for any equilibrium with $I^*\neq0$ (where the conservation law  $I(t)+\sum_{i=0}^nS_i(t)\equiv 1$ is used in the first equality).

The endemic equilibrium for \eqref{eq:3c} is given by 
\be\label{eq:3aae}
\begin{aligned}
& 0 = \sum_{i=1}^{n}\omega_i S_i^*+ d_0(I^*) S_0^* + rI^* ; \\
& 0= \delta_{i-1}S_{i-1}^* + d_i(I^*) S_i^*,\quad i\neq \{0,n\}; \\ 
& 0 = \mu+\delta_{n-1}S_{n-1}^* + d_n(I^*) S_n^*,
\end{aligned}
\ee
where $d_i$ are defined in \eqref{eq:dexpress}. We next express the vector ${\bf S}^*:=(S_0^*,\ldots,S_n^*)$ in terms of $I^*$ by writing 
as ${\bf R}^\intercal= {\bf A}_\delta({\bf S}^*)^\intercal$, where ${\bf R}=-(rI^*,0,\ldots,0,\mu)$ and the $(n+1)\times (n+1) $ matrix ${\bf A}_\delta$ is given by
 \be\label{eq:2p}
 {\bf A}_\delta=  \begin{pmatrix}d_0(I^*)&\omega_1&\dots   &\omega_{k-1}&\omega_k&\dots &\omega_{n-1}&\omega_{n}\\\delta&d_1(I^*)&\dots&0&0 &\dots &0 &0 \\0&\delta_1&\dots &0&0 &\dots&0 &0   \\\vdots &\vdots &\ddots&\vdots&\vdots&\ddots&\vdots & \vdots   \\0 &0&\dots & \delta_{k-1}&d_{k}(I^*)&\dots&0&0  \\\vdots &\vdots &\ddots&\vdots&\vdots&\ddots &\vdots  &\vdots  \\0 &0 &\dots&0&0&\dots&d_{n-1}(I^*)&0 \\0 &0 &\dots&0&0&\dots&\delta_{n-1}&d_n(I^*)  \end{pmatrix}.
 \ee
 
 In particular, if ${\bf A}_\delta$ is invertible (which  will be shown to be true for small values of $\delta$), we get $({\bf S}^*)^\intercal={\bf  A}_\delta^{-1}{\bf R}^\intercal$.
Let ${\boldsymbol \beta}=(\beta_0,\ldots,\beta_n)$, then  \eqref{eq:endco} yields 
\be\label{eq:Fdef}
r+\mu={\boldsymbol \beta}\cdot {\bf S}^*={\boldsymbol \beta} \pa{{\bf A}_\delta^{-1}{\bf R}^\intercal}=:F_\delta(I^*).
\ee

We rewrite the latter equation as 
\be\label{eq:Feq}
r+\mu-F_0(I^*)=F_\delta(I^*)-F_0(I^*).
\ee
The idea now is to show that 
\be\label{eq:normb}
\abs{F_\delta(I^*)-F_0(I^*)}\le K\delta
\ee
for some $\delta$ independent constant $K$, for sufficiently small values of $\delta$. This would imply that $I^*$ has to satisfy the inequality
\be\label{eq:normba}
\abs{r+\mu-F_0(I^*)}\le K\delta,
\ee
which will allow us to determine the location of $I^*$ within $\sqrt\delta$ precision using P{\'o}lya's theorem on polynomials. We then will show the uniqueness of the corresponding solution.

We proceed by verifying the details of these steps. To get \eqref{eq:normb} the following estimates will be used (see, e.g.,  \cite[Lemma 7.18]{Dym}):
\be\label{eq:resoldiff}
\begin{aligned}
\norm{{\bf A}_\delta^{-1}}&\le\frac{\norm{{\bf A}_0^{-1}}}{1-\norm{{\bf A}_\delta-{\bf A}_0}\norm{{\bf A}_0^{-1}}};\\ \norm{{\bf A}_\delta^{-1}-{\bf A}_0^{-1}}&\le \norm{{\bf A}_\delta-{\bf A}_0}\frac{\norm{{\bf A}_0^{-1}}^2}{1-\norm{{\bf A}_\delta-{\bf A}_0}\norm{{\bf A}_0^{-1}}},
\end{aligned}
\ee
provided $\norm{{\bf A}_\delta-{\bf A}_0}\norm{{\bf A}_0^{-1}}<1$. In this case,

\be\label{eq:Adiff1}
{\bf A}_\delta-{\bf A}_0={\begin{pmatrix}-\delta & 0 & 0 & \dots & 0 & 0 \\ \delta & -\delta_1 & 0  & \dots & 0 & 0 \\ 0 & \delta_1 & -\delta_2 & \ldots & 0 & 0 \\\vdots &\vdots &\vdots&\ddots &\vdots & \vdots \\0 & 0 &0 & \ldots & -\delta_{n-1} & 0 \\ 0 & 0 & 0 & \ldots & \delta_{n-1} & 0 \end{pmatrix}},
\ee

and thus 
\be\label{eq:Adiff}
 \norm{{\bf A}_\delta- {\bf A}_0}\le 2\delta,
\ee
where we have used the Schur test, \cite{S},
\be\label{eq:Holmgren}
\|{\bf M}\|\leq {\sqrt {\|{\bf M}\|_{1}\|{\bf M}\|_{\infty }}}.
\ee
Here $\|{\bf M}\|_{1}$ is the matrix $1$-norm of an $n\times n$ matrix ${\bf M}$, given by
\[\|{\bf M}\|_{1}=\max_j\sum_{i=1}^n\abs{{\bf M}_{ij}},\]
and $\|{\bf M}\|_{\infty}$ is  the matrix $\infty$-norm of an $n\times n$ matrix ${\bf M}$, given by
\[\|{\bf M}\|_{\infty}=\max_i\sum_{j=1}^n\abs{{\bf M}_{ij}},\]
see, e.g., \cite{HJ}.

On the other hand, ${\bf A}_0^{-1}$ can be evaluated explicitly:

\be\label{eq:A_0}
{\bf A}_0^{-1}={\begin{pmatrix}1/\hat d_0(I^*)& - \hat \omega_1  & \dots& - \hat \omega_n  \\ 0 & 1/\hat d_1(I^*) & \dots  & 0    \\\vdots &\vdots &\ddots & \vdots \\0 &0  & \dots &  1/\hat d_n(I^*) \end{pmatrix}},
\ee
where each $\hat d_i(I^*)$ is obtained from $d_i(I^*)$ by setting $\delta=0$ there, and 
\[\hat \omega_i = \frac{\omega_i}{\hat d_{0}(I^{*}) \hat d_{i}(I^{*})}.\]
Using \eqref{eq:Holmgren} and $\beta_0\le \beta_i$ for all $i$, we can bound
\be\label{eq:A_0norm}
\norm{{\bf A}_0^{-1}}\le \frac{\sqrt{n+1}}{\beta_0I^*+\mu}.
\ee
Hence  
\be\label{eq:difnapr}
\norm{{\bf A}_\delta- {\bf A}_0}\norm{{\bf A}_0^{-1}}<1/2
\ee 
for $\delta$ sufficiently small.

Combining \eqref{eq:resoldiff}, \eqref{eq:Adiff}, \eqref{eq:A_0norm}, and \eqref{eq:difnapr}, we get 
\be\label{eq:normdif}
\abs{F_\delta(I^*)-F_0(I^*)}\le \norm{{\boldsymbol \beta}}\norm{ {\bf A}_\delta^{-1}- {\bf A}_0^{-1}}\norm{{\bf R}}\le \frac{4 (n+1)^{3/2}\beta_n\pa{r+\mu}}{\pa{\beta_0I^*+\mu}^2}\delta, \ee
and \eqref{eq:normb} follows.

We now state the implications of the bound \eqref{eq:normba}. 

\vspace{-.2cm}

\begin{lem}\label{lem:1}
There exists a constant $C$ such that for $\delta$ sufficiently small, a set of possible values $I^*$ for endemic equilibria in \eqref{eq:3c} is contained in the union of two intervals 
\be\label{eq:roots}
J_1=\left[y_1- C\sqrt{\delta} ,y_1+C\sqrt{\delta}\right],\quad J_2=\left[y_2 - C\sqrt{\delta},y_2+C\sqrt{\delta}\right]
\ee
where  $y_{1,2}$ are roots of the equation
\be\label{eq:rootsa}
1+\frac{r}{\beta_0x+\mu}-\frac{\beta_n}{\beta_n x+\mu+\omega_n}-\frac{\omega_n\beta_0}{\pa{\beta_0x+\mu}\pa{\beta_n x+\mu+\omega_n}}=0.
\ee
In particular, if this equation has no real roots or no roots in $[0,1]$, then for $\delta$ small enough we do not have endemic equilibria. This is the case for $\pa{\omega_n+\mu}\pa{\mu+r}>{\beta_0\omega_n+\beta_n\mu}$. If $\pa{\omega_n+\mu}\pa{\mu+r}<{\beta_0\omega_n+\beta_n\mu}$, then there is exactly one root $y$ in $[0,1]$ (with $y=0$ for $\pa{\omega_n+\mu}\pa{\mu+r}={\beta_0\omega_n+\beta_n\mu}$). 
\end{lem}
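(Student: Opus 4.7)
The plan is to leverage the perturbative bound $|F_\delta(I^*) - F_0(I^*)| \le K\delta$ already derived in the proof of Theorem~\ref{thm:1}, valid uniformly for $I^* \in [0,1]$ (the latter being enforced by the conservation law $\sum_i S_i^* + I^* = 1$). Combined with the endemic identity $F_\delta(I^*) = r+\mu$ from \eqref{eq:Fdef}, this gives $|r+\mu - F_0(I^*)| \le K\delta$, which I will then turn into a root-localization problem for a single quadratic polynomial. First I would evaluate $F_0(x) = \boldsymbol{\beta}\, {\bf A}_0^{-1}{\bf R}^\intercal$ directly from the explicit upper-triangular formula \eqref{eq:A_0}: only the first and last components of ${\bf R} = -(rx, 0, \ldots, 0, \mu)$ are nonzero, so
\begin{equation*}
F_0(x) = \frac{\beta_0 r x}{\beta_0 x + \mu} + \frac{\beta_0 \omega_n \mu}{(\beta_0 x + \mu)(\beta_n x + \mu + \omega_n)} + \frac{\beta_n \mu}{\beta_n x + \mu + \omega_n}.
\end{equation*}
Multiplying $r + \mu - F_0(x)$ by the positive factor $(\beta_0 x + \mu)(\beta_n x + \mu + \omega_n)/\mu$ clears denominators, producing precisely \eqref{eq:rootsa} in polynomial form; this is a quadratic $Q(x) = \beta_0\beta_n x^2 + Bx + C$ with leading coefficient $\beta_0\beta_n > 0$ and constant term $C = (\omega_n + \mu)(\mu + r) - \beta_0 \omega_n - \beta_n \mu$, whose roots are the $y_{1,2}$ of the statement.

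Because $I^* \in [0,1]$, the clearing factor is bounded above on this range, so the estimate on $r+\mu - F_0$ transfers to $|Q(I^*)| \le K'\delta$ uniformly in $I^*$. If $Q$ has complex conjugate roots $a \pm ib$ with $b \ne 0$, then $|Q(x)| \ge \beta_0\beta_n b^2 > 0$ on $\mathbb R$, contradicting this estimate once $\delta$ is small enough and ruling out endemic equilibria altogether. If instead $Q$ has real roots $y_1, y_2$, the factorization $Q(x) = \beta_0\beta_n(x - y_1)(x - y_2)$ forces $|I^* - y_1|\,|I^* - y_2| \le K'\delta/(\beta_0\beta_n)$, whence $\min_j |I^* - y_j| \le \sqrt{K'\delta/(\beta_0\beta_n)}$, which is the P\'olya-style root-localization producing the intervals in \eqref{eq:roots} with $C = \sqrt{K'/(\beta_0\beta_n)}$.

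Finally, for existence and uniqueness in $[0,1]$ I would run a sign analysis on $Q$. A direct expansion shows that $Q(1) = \beta_n r + \beta_0 \mu + (r + \mu)(\omega_n + \mu) > 0$ always, while $Q(0) = C$. Under $(\omega_n + \mu)(\mu + r) > \beta_0 \omega_n + \beta_n \mu$ one has $C > 0$, so any real roots share a sign; exploiting this inequality together with $\beta_0 \le \beta_n$ one verifies that the sum of roots $1 - (r+\mu)/\beta_0 - (\omega_n + \mu)/\beta_n$ is negative, placing both real roots in $(-\infty, 0)$ and ruling out realistic equilibria. Under the reverse inequality $C < 0$, so $Q(0) < 0 < Q(1)$ and the intermediate value theorem produces exactly one root in $(0,1)$, the other being forced negative by Vi\`eta. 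The main technical obstacle will be verifying the sum-of-roots sign in the first case, which requires a short algebraic manipulation with a case split on the sign of $\beta_n - \omega_n - \mu$; the remaining steps are routine bookkeeping.
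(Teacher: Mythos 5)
Your proposal is essentially correct and follows the same overall route as the paper: use the perturbative bound $|F_\delta - F_0| \le K\delta$, express $F_0$ explicitly from \eqref{eq:A_0}, clear denominators to reduce to a quadratic $Q$, and then do a root-localization plus sign analysis. Two small differences are worth noting. First, for localization you replace the paper's appeal to P\'olya's theorem with the elementary observation that $|Q(x)| = \beta_0\beta_n|x-y_1|\,|x-y_2| \le K'\delta$ forces $\min_j|x - y_j| \le \sqrt{K'\delta/(\beta_0\beta_n)}$, and that complex roots force $|Q| \ge \beta_0\beta_n b^2$ bounded away from zero. This is actually a cleaner and more self-contained argument than the paper's measure-theoretic bound followed by a ``symmetry considerations'' step, and it directly produces the containment in $J_1 \cup J_2$. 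Second, for the case $(\omega_n + \mu)(\mu + r) > \beta_0\omega_n + \beta_n\mu$, you leave the sum-of-roots sign as an unfinished ``technical obstacle'' to be handled by a case split on $\beta_n - \omega_n - \mu$. You do not need the case split: dividing the hypothesis by $\omega_n + \mu$ gives $\mu + r > \frac{\beta_0\omega_n + \beta_n\mu}{\omega_n+\mu}$, and the right-hand side is a weighted average of $\beta_0$ and $\beta_n$ (hence $\ge \beta_0$), so $\mu + r > \beta_0$ and the sum of roots $1 - \frac{\mu+r}{\beta_0} - \frac{\mu+\omega_n}{\beta_n}$ is immediately negative (the paper makes exactly this observation via the positivity of its coefficient $a$ in \eqref{eq:ab}). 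With that one-line fix your argument is complete; the rest, including $Q(0) < 0 < Q(1)$ and the Vi\`ete sign argument for uniqueness, matches the paper's.
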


{\it Proof}: 
Note that $F_0$ can be evaluated explicitly, namely

\be\label{eq:F0}
\begin{aligned}
F_0(I^*)&={\boldsymbol \beta} \pa{{\bf A}_0^{-1}{\bf R}^\intercal}=\beta_0\pa{\mu \hat \omega_n-\frac{rI^*}{\hat d_0(I^*)}}-\beta_n\frac{\mu}{\hat d_n(I^*)}
\\&=\frac{\beta_0\mu\omega_n}{\pa{\beta_0I^*+\mu}\pa{\beta_n I^*+\mu+\omega_n}}+\frac{\beta_0rI^*}{\beta_0I^*+\mu}+\frac{\beta_n\mu}{\beta_n I^*+\mu+\omega_n},
\end{aligned}
\ee
using \eqref{eq:A_0}. Thus, after simplifications, we see that $I^*$ is a solution of the equation
\be
r+\mu-F_0(x)=\mu+\frac{r\mu}{\beta_0x+\mu}-\frac{\mu\beta_n}{\beta_n x+\mu+\omega_n}-\frac{\omega_n\mu\beta_0}{\pa{\beta_0x+\mu}\pa{\beta_n x+\mu+\omega_n}}.
\ee

We now proceed to consider the equation given $\beta_0 > 0$, and will return to the case $\beta_0 = 0$ in Remark \ref{b0=0} below. 
 
In this case, taking the common denominator yields
\be\label{eq:cden}
r+\mu-F_0(x)=Q(x)\frac{\mu\beta_0\beta_n}{\pa{\beta_0x+\mu}\pa{\beta_n x+\mu+\omega_n}},\quad Q(x)=x^2+ax+b,
\ee
with
\be\label{eq:ab}a=\frac{\beta_0\pa{\mu+\omega_n}+\beta_n\pa{\mu+r-\beta_0}}{\beta_0\beta_n},\quad b=\frac{\pa{\omega_n+\mu}\pa{\mu+r}-\pa{\beta_0\omega_n+\beta_n\mu}}{\beta_0\beta_n}.\ee
Hence  \eqref{eq:normba} can be rewritten as 
\be\label{eq:normbar}
\abs{Q(I^*)}\le \tilde C\delta \pa{\beta_0I^*+\mu}\pa{\beta_n I^*+\mu+\omega_n},
\ee
and since $I^*\in[0,1]$, we deduce that if $I^*$ satisfies \eqref{eq:normbar}, it must also satisfy
\be\label{eq:normbarm}
\abs{Q(I^*)}\le \tilde C\delta \max_{x\in[0,1]}\pa{\beta_0x+\mu}\pa{\beta_n x+\mu+\omega_n}=\hat C\delta.
\ee
 This means that, if $\mathcal B$ denotes the set 
\be\label{eq:setB}\mathcal B=\{x\in[0,1]:\ \abs{Q(x)}\le\hat C\delta\},\ee
 then any solution $I^*$ of \eqref{eq:3aae} is contained in $\mathcal B$.

Since $Q(x)$ is a continuous function, we know that $\mathcal B$ is the union of disjoint closed interval(s), and that $Q(x)$ assumes the values $\pm\hat C\delta$ at the endpoints of these intervals. There are at most two intervals $\tilde J_{1,2}$ such that $ \mathcal B=\tilde J_1\cup \tilde J_2$, since a quadratic polynomial $Q$ can assume any value at most twice.

The size (namely, the Lebesgue measure) of the set $\mathcal B$ now needs to be estimated.
To this end, we use P{\'o}lya's theorem, see \cite{P}: For  a monic polynomial  $P$ of degree $q$, 
\be\label{Polya} m({\{x\in \mathbb {R} :\abs{P(x)}\leq \epsilon\}})\leq 4\left({\frac {\epsilon}{2}}\right)^{1/q},\quad \epsilon>0,
\ee
where $m\pa{ \cdot }$ stands for the Lebesgue measure of a set.

This enables us to bound a measure of the set $\mathcal B$ defined in \eqref{eq:setB}, namely ($q = 2$ and with the choice  $\epsilon=\hat C\delta$)
\be
m\pa{\mathcal B}\le 2\sqrt{2\hat C\delta}.
\ee
If $Q(x)$ does not have real roots then $\mathcal B=\emptyset$  for $\delta$ sufficiently small, so there cannot be an endemic equilibrium. If the roots $y_{1,2}$ of $Q(x)$ - that is, the roots of \eqref{eq:rootsa} are real,
\be\label{roots}
y_{i}:\ Q(y_i)=0,\quad y_1\le y_2,
\ee
then they are contained in $\mathcal B$, where the latter consists of at most two intervals. Moreover,  we deduce from symmetry considerations that in this case $\mathcal B\subset J_1\cup J_2$, where 
\be\label{eq:strucB}
J_1=\left[y_1- \sqrt{2\delta \hat C} ,y_1+\sqrt{2\delta \hat C}\right],\quad J_2=\left[y_2- \sqrt{2\delta \hat C} ,y_2+\sqrt{2\delta \hat C}\right].
\ee
This implies that the solutions of \eqref{eq:3aae} for $I^*\in(0,1]$ must satisfy
\be\label{eq:strucI}
I^*\in \pa{J_1\cup J_2}\cap [0,1].
\ee
Hence if $y_1<0$, we see that $I^*\in J_2$, and if $y_1>1$, we  infer that there are no endemic equilibria for sufficiently small $\delta$. This establishes \eqref{eq:roots}.

We now analyze the conditions under which \eqref{eq:rootsa} has roots in $[0,1]$. To begin with, evaluating the left hand side of \eqref{eq:rootsa} for $x=0$ yields \[\frac{\pa{\omega_n+\mu}\pa{\mu+r}-\pa{\beta_0\omega_n+\beta_n\mu}}{\mu\pa{\mu+\omega_n}},\] so the left-hand side (lhs) is negative for $\pa{\omega_n+\mu}\pa{\mu+r}<{\beta_0\omega_n+\beta_n\mu}$ at this value for $x$. On the other hand, evaluating the left hand side  for $x=1$ gives
\[\frac{\mu\pa{\mu+\omega_n+\beta_0}+r\pa{\mu+\omega_n+\beta_n}}{\pa{\beta_0+\mu}\pa{\beta_n+\mu+\omega_n}}>0.\] This implies by the intermediate value theorem that there must be a root $y$ for \eqref{eq:rootsa}  if \[\pa{\omega_n+\mu}\pa{\mu+r}<{\beta_0\omega_n+\beta_n\mu}.\] Moreover, upon taking the common denominator in  \eqref{eq:rootsa} we get a quadratic polynomial in the numerator (a multiple of $Q$ with a positive leading coefficient), which is negative at $0$ and is positive at $1$. This in fact implies that there is only one root $y_2$ in $(0,1)$, with the second root being strictly negative in this case. 

If $\pa{\omega_n+\mu}\pa{\mu+r}>{\beta_0\omega_n+\beta_n\mu}$, the coefficients $a,b$ in \eqref{eq:ab} are positive. For $b$, this follows directly from \eqref{eq:ab}, while for $a$ it follows from the observation that $\pa{\omega_n+\mu}\pa{\mu+r}>{\beta_0\omega_n+\beta_n\mu}$ is equivalent to
\[\frac{\beta_0\omega_n}{\omega_n+\mu}+\frac{\beta_n\mu}{\omega_n+\mu}<\mu+r,\]
and from the inequality
\[\beta_0<\frac{\beta_0\omega_n}{\omega_n+\mu}+\frac{\beta_n\mu}{\omega_n+\mu}.\]
These relations imply that $\beta_0<\mu+r$ and the positivity of $a$ is now straightforward.

Since $a,b$ are both positive, we deduce that $Q$  (and thus \eqref{eq:rootsa} as well) has no roots in $[0,1]$ in this case. \qed

\begin{rem}\label{b0=0}

If $\beta_0 = 0$, \eqref{eq:cden} becomes \[r+\mu-F_0(x)= \tilde Q(x) \frac{1}{\pa{\beta_n x+\mu+\omega_n}},\] where $\tilde Q(x) = \beta_n(r + \mu)x + (r + \mu)(\omega_n + \mu) - \mu \beta_n$. If $(r + \mu)(\omega_n + \mu) > \beta_n \mu$, $\tilde Q(x)$ has no roots in $[0, 1]$. If $(r + \mu)(\omega_n + \mu) < \beta_n \mu$, then since $Q(0) < 0$ and $Q(1) = \beta_n(r + \mu) + (r + \mu)(\omega_n + \mu) - \mu \beta_n > 0$, $Q(x)$ has a unique root in $[0, 1]$. Hence in this case Theorem \ref{thm:1} follows immediately (with $\beta_0 = 0$ in the condition there).

\end{rem}
\vspace{.2cm}

The next assertion shows the uniqueness of the corresponding solution for $\beta_0 \neq 0$.

\vspace{-.2cm}

\begin{lem}\label{lem:2}
Let $y_{1,2}$ and $C$ be as in Lemma \ref{lem:1}, and let $J_{1,2}$ be the intervals in that lemma.  Assume that $ J_i\subset(0,1]$ for some (or both) values of $i=1,2$. Then, for $\delta$ sufficiently small there exists a unique endemic equilibrium for \eqref{eq:3c} with $I^*\in J_i$, provided $\abs{y_1-y_2}\ge\delta^{1/3}$.
\end{lem}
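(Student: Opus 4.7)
The plan is to leverage Lemma \ref{lem:1}: any endemic equilibrium has $I^* \in J_1 \cup J_2$, so uniqueness within each $J_i \subset (0,1]$ will follow from showing that $F_\delta$ is strictly monotone on $J_i$, while existence will come from an intermediate value argument based on the proximity of $F_\delta$ to $F_0$ at the root $y_i$. The hypothesis $|y_1-y_2|\ge \delta^{1/3}$ is tailored precisely to keep $|F_0'|$---and hence $|F_\delta'|$---bounded away from zero throughout the width-$\sqrt\delta$ interval $J_i$.

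The first step is a lower bound on $|F_0'|$. From \eqref{eq:cden}, $r+\mu - F_0(x) = Q(x) g(x)$ with
\[
g(x) = \frac{\mu\beta_0\beta_n}{(\beta_0 x + \mu)(\beta_n x + \mu + \omega_n)}
\]
smooth and strictly positive on $[0,1]$, so $F_0'(x) = -Q'(x) g(x) - Q(x) g'(x)$. On $J_i$ one has $|Q(x)|\le \hat C \delta$ from \eqref{eq:normbarm} and $|Q'(x)| = |2x-(y_1+y_2)| \ge |y_1-y_2| - 2|x-y_i| \ge \delta^{1/3} - 2C\sqrt\delta \ge \tfrac12 \delta^{1/3}$ for $\delta$ small, using that $\sqrt\delta \ll \delta^{1/3}$. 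Together with the positive lower bound $g_{\min} := \min_{[0,1]} g > 0$, this yields $|F_0'(x)| \ge c\, \delta^{1/3}$ on $J_i$ for some $c > 0$ and all sufficiently small $\delta$.

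Next, I would upgrade the pointwise perturbation bound \eqref{eq:normb} to a derivative bound $|F_\delta'(x) - F_0'(x)| \le K'\delta$. Since $\mathbf{A}_\delta$ depends linearly on $x = I^*$ with derivative $\mathbf{D} := -\mathrm{diag}(\beta_0,\ldots,\beta_n)$ independent of $\delta$, differentiating \eqref{eq:Fdef} and using $(\mathbf{A}^{-1})' = -\mathbf{A}^{-1}\mathbf{D}\mathbf{A}^{-1}$ gives
\[
F_\delta'(x) - F_0'(x) = \boldsymbol{\beta}\bigl[\mathbf{A}_0^{-1}\mathbf{D}\mathbf{A}_0^{-1} - \mathbf{A}_\delta^{-1}\mathbf{D}\mathbf{A}_\delta^{-1}\bigr]\mathbf{R}^\intercal + \boldsymbol{\beta}(\mathbf{A}_\delta^{-1} - \mathbf{A}_0^{-1})\mathbf{R}'^\intercal,
\]
with $\mathbf{R}'^\intercal = -(r,0,\ldots,0)^\intercal$. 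Splitting the bracket telescopically as $(\mathbf{A}_0^{-1}-\mathbf{A}_\delta^{-1})\mathbf{D}\mathbf{A}_0^{-1} + \mathbf{A}_\delta^{-1}\mathbf{D}(\mathbf{A}_0^{-1}-\mathbf{A}_\delta^{-1})$ and applying the resolvent bounds \eqref{eq:resoldiff}, the norm estimate \eqref{eq:Adiff}, and the explicit bound \eqref{eq:A_0norm} yields the desired $O(\delta)$ control. Combining with the previous step gives $|F_\delta'(x)| \ge \tfrac{c}{2}\,\delta^{1/3}$ on $J_i$ for $\delta$ sufficiently small, so $F_\delta$ is strictly monotone on $J_i$ and $F_\delta(x) = r+\mu$ has \emph{at most} one solution there.

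For existence, I would use $F_0(y_i) = r+\mu$ together with a first-order expansion: at each endpoint $y_i \pm C\sqrt\delta$ of $J_i$,
\[
F_0(y_i \pm C\sqrt\delta) - (r+\mu) = \pm C\sqrt\delta\, F_0'(y_i) + O(\delta),
\]
which has magnitude $\Theta(\delta^{5/6})$ and opposite signs at the two endpoints, since $|F_0'(y_i)| \ge c\,\delta^{1/3}$. Because \eqref{eq:normb} gives $|F_\delta - F_0| \le K\delta$ and $\delta \ll \delta^{5/6}$ for small $\delta$, the same sign reversal persists for $F_\delta - (r+\mu)$, and the IVT produces the sought root in $J_i$. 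The main technical point---and the reason for the exponent $1/3$ in the hypothesis---is the hierarchy of scales: interval width $\sqrt\delta$, root separation $\delta^{1/3}$, matrix perturbation $\delta$, and IVT margin $\delta^{5/6} = \sqrt\delta \cdot \delta^{1/3}$. The inequality $\delta \ll \delta^{5/6}$ is what closes the argument, and a weaker separation would allow the $O(\delta)$ perturbation to swamp the IVT window and destroy both the monotonicity and the existence step.
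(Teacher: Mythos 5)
Your proof is correct but uses a genuinely different mechanism from the paper's. The paper rewrites $F_\delta(I^*)=r+\mu$ as the fixed-point equation $u=G(u)$ for $u=I^*-y_i$ and invokes the Banach fixed-point theorem, verifying invariance of $S=[-C\sqrt\delta,C\sqrt\delta]$ under $G$ (where $|\tilde G|\le D\delta^{2/3}$) and a Lipschitz constant $\tilde K\delta^{1/3}<1$; you instead split existence (IVT at the endpoints of $J_i$, where the sign of $F_0-(r+\mu)$ flips with margin $\Theta(\delta^{5/6})$ that a $K\delta$ perturbation cannot overturn) from uniqueness (strict monotonicity of $F_\delta$ via $|F_\delta'|\gtrsim\delta^{1/3}$ on $J_i$). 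Both routes need exactly the same two ingredients from the paper --- the value bound \eqref{eq:normb} and the derivative bound on $F_\delta-F_0$ --- and both isolate the same hierarchy $\delta\ll\delta^{5/6}\ll\delta^{2/3}\ll\sqrt\delta\ll\delta^{1/3}$. The contraction route is more compact and constructive; yours makes more transparent why $\delta^{1/3}$ is the correct separation threshold.

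One intermediate claim is misstated: you write that $|Q(x)|\le\hat C\delta$ on $J_i$ ``from \eqref{eq:normbarm},'' but \eqref{eq:normbarm} only applies to the actual equilibrium $I^*$, not to arbitrary $x\in J_i$. What is true is that, since $Q(x)=(x-y_1)(x-y_2)$ and $|x-y_i|\le C\sqrt\delta$ on $J_i$, one has $|Q(x)|\le C\sqrt\delta\,(|y_1-y_2|+C\sqrt\delta)=O(\sqrt\delta)$. This weaker bound still suffices, because it is $o(\delta^{1/3})$, so the term $Q(x)\,g'(x)$ remains dominated by $Q'(x)\,g(x)\gtrsim\delta^{1/3}$ in the formula $F_0'=-Q'g-Qg'$, and your lower bound $|F_0'|\ge c\,\delta^{1/3}$ goes through unchanged. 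Replacing the incorrect citation with this direct estimate closes the argument cleanly.
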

This completes the proof of Theorem \ref{thm:1}.
\qed
\begin{proof}[Proof of Lemma \ref{lem:2}]
The starting point here is \eqref{eq:Feq}.  Taking the common denominator on the right hand side (cf. \eqref{eq:cden}) and using the fact that the $y_{1,2}$ used in the proof of the previous lemma are the roots of the resulting numerator yields the identity
\be\label{eq:EE2}
\frac{ \beta_0\beta_n\mu\pa{I^*-y_1}\pa{I^*-y_2} }{\pa{\beta_0I^*+\mu}\pa{\beta_n I^*+\mu+\omega_n}}=F_\delta(I^*)-F_0(I^*).
\ee
Suppose that the interval $\tilde J_1$ satisfies the assumptions of the lemma (the argument for $\tilde J_2$ will be identical). The above equation is then brought into the form
\be\label{eq:tildeG}
I^*-y_1=\tilde G(I^*),
\ee
where
\be
\tilde G(x)=\frac{{\pa{\beta_0x+\mu}\pa{\beta_n x+\mu+\omega_n}}}{ \beta_0\beta_n\mu\pa{x-y_2} }\pa{F_\delta(x)-F_0(x)}.
\ee
 Let $u=x-y_1$ and let $G$ be a function given by $G(u)=\tilde G(u+y_1)$.  \eqref{eq:tildeG} then reduces to
\be\label{eq:G}
u= G(u).
\ee
We claim that 
\begin{enumerate}[label=(\alph*)]
\item\label{it:a} The interval $S:=\tilde J_1-y_1$ is invariant under $G$, i.e., $G(S)\subset S$; 
\item\label{it:b}  $G$ is a contraction on $S$, i.e., $\abs{G(u)-G(v)}\le \alpha\abs{u-v}$ for $u,v\in S$ and $\alpha<1$.
\end{enumerate}
The items \ref{it:a}  and \ref{it:b} imply that the Banach fixed-point theorem is applicable in this context and so the solution $u^*$ of \eqref{eq:G} exists and is unique in $S$, and the original assertion follows. To establish \ref{it:a}, we note that, since  $\abs{y_1-y_2}\ge \delta^{1/3}$ by the above assumption, for $x\in \tilde J$ we have, by \eqref{eq:normdif} with some $D>0$, 

\be
\abs{\tilde G(x)}\le \max_{x\in[0,1]}\frac{{\pa{\beta_0x+\mu}\pa{\beta_n x+\mu+\omega_n}}}{ \beta_0\beta_n\mu\delta^{1/3} }\,\max_{x\in[0,1]}\abs{F_\delta(x)-F_0(x)}\le D\delta^{2/3}
\ee
for $\delta$ sufficiently small. Since
\[S=\left[- C\sqrt{\delta} ,C\sqrt{\delta}\right]\supset\left[-D\delta^{2/3},D\delta^{2/3}\right],\]
we see that \ref{it:a}  holds for such $\delta$. 

To check \ref{it:b}, the derivative $G'$ on $S$ is bounded using the product rule. We first compute (see \eqref{eq:Fdef})
\be\label{eq:prrulF}
\partial_x({F_\delta(x)-F_0(x)})={\boldsymbol \beta} \,\partial_x({{\bf A}_\delta^{-1}- {\bf A}_0^{-1}}){\bf R}^\intercal+{\boldsymbol \beta}({{\bf A}_\delta^{-1}-{\bf A}_0^{-1}})\partial_x{\bf R}^\intercal.
\ee
Next, we note that $\partial_x{\bf R}^\intercal=r{\bf e}_1^\intercal$, and since we have the resolvent identity

\[{\bf A}_\delta^{-1}-{\bf A}_0^{-1}={\bf A}_\delta^{-1}\pa{{\bf A}_0-{\bf A}_\delta}{\bf A}_0^{-1},\]
and ${\bf A}_0-{\bf A}_\delta$ is $x$-independent (see \eqref{eq:Adiff1}), we get that
\be\label{eq:prrul}
 \partial_x({{\bf A}_\delta^{-1}-{\bf A}_0^{-1}})=({ \partial_x {\bf A}_\delta^{-1}})\pa{{\bf A}_0- {\bf A}_\delta}{\bf A}_0^{-1}+{\bf A}_\delta^{-1}\pa{{\bf A}_0-{\bf A}_\delta}({\partial_x {\bf A}_0^{-1}}).
\ee
Bounding the right-hand side uses the relations
\[{ \partial_x {\bf A}_\delta^{-1}}=-{\bf A}_\delta^{-1}\pa{ \partial_x {\bf A}_\delta} {\bf A}_\delta^{-1},\quad { \partial_x {\bf A}_0^{-1}}= - {\bf A}_0^{-1}\pa{ \partial_x {\bf A}_0} {\bf A}_0^{-1},\]
which can be seen from differentiating the identities ${\bf A}_\delta^{-1} {\bf A}_\delta=I_n$ and ${\bf A}_0^{-1} {\bf A}_0=I_n$.  We also have (see \eqref{eq:2p})
\be\label{eqAderv}
 \partial_x {\bf A}_0=\partial_x {\bf A}_\delta= {\begin{pmatrix}\beta_0&0&\dots&0\\0&\beta_1&\dots&0    \\\vdots &\vdots &\ddots&\vdots  \\0 &0 &\dots&\beta_n  \end{pmatrix}}.
 \ee
Finally, we use \eqref{eq:resoldiff},  \eqref{eq:Adiff},  \eqref{eq:A_0norm}, \eqref{eq:prrul}, and  \eqref{eqAderv} to obtain the bound
\[\norm{ \partial_x({{\bf A}_\delta^{-1}- {\bf A}_0^{-1}})}\le K\delta\]
for some $\delta$-independent constant $K$.

Using this observation in \eqref{eq:prrulF} yields the estimate 
\be
\max_{x\in[0,1]}\abs{\partial_x({F_\delta(x)-F_0(x)})}\le \hat K \delta
\ee
for some $\hat K>0$. Thus, using \eqref{eq:normdif} as well,
\begin{multline*}
\max_{u\in S}\abs{G'(u)}\le \hat K\delta \max_{u\in S}\abs{\frac{{\pa{\beta_0\pa{u+y_1}+\mu}\pa{\beta_n \pa{u+y_1}+\mu+\omega_n}}}{ \beta_0\beta_n\mu\pa{u+y_1-y_2} }}\\ +\check K\delta\max_{u\in S}\abs{\partial_u\frac{{\pa{\beta_0\pa{u+y_1}+\mu}\pa{\beta_n \pa{u+y_1}+\mu+\omega_n}}}{ \beta_0\beta_n\mu\pa{u+y_1-y_2} }},
\end{multline*}
for some $\check K>0$. Since \[\max_{u\in S}\abs{\frac1{u+y_1-y_2}}\le 2\delta^{-1/3}\] for $\delta$ sufficiently small, after taking the derivative on the right hand side of the previous equation, we obtain the bound
\be
\max_{u\in S}\abs{G'(u)}\le \tilde K\delta^{1/3}
\ee
for some $\tilde K>0$. 
Hence, by the fundamental theorem of calculus, 
\be
 \abs{G(u)-G(v)}\le  \alpha\abs{u-v} \mbox{ for all } u,v \in S, \quad \alpha=\tilde K \delta^{1/3},
 \ee 
 and so \ref{it:b} has been established as well, and the solution for $I^*$ exists and is unique.  \end{proof}

\subsection{Linear Stability Analysis for Endemic Equilibria}\label{sec5}
We will prove the following  results:

\vspace{-.2cm}

\begin{thm}\label{thm:endn}
The endemic equilibrium for \eqref{eq:3c} is asymptotically stable for \[\pa{\omega_n+\mu}\pa{\mu+r}<{\beta_0\omega_n+\beta_n\mu},\] provided  $\delta$ is sufficiently small.  
\end{thm}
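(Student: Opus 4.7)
The plan is to analyze the Jacobian ${\bf J}$ from \eqref{eq:2aa} at the endemic equilibrium perturbatively around $\delta = 0$, exploiting the fact that in that limit the endemic equilibrium concentrates on the $(S_0, S_n, I)$-subsystem while the intermediate compartments decouple trivially. From \eqref{eq:3aae} with $\delta = 0$, the middle equations reduce to $d_i(I^*) S_i^* = 0$, forcing $S_i^* = 0$ for $0 < i < n$, while $S_0^*, S_n^*, I^* > 0$ are determined by the remaining equations together with the endemic identity $\sum_i \beta_i S_i^* = r + \mu$ from \eqref{eq:endco}. Existence of such a positive $I^*$ under the hypothesis $(\omega_n+\mu)(\mu+r) < \beta_0\omega_n + \beta_n\mu$ is guaranteed by Lemma \ref{lem:1}.

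At $\delta = 0$, the Jacobian \eqref{eq:2aa} evaluated at this equilibrium has a particularly simple structure: for each $k \in \{1, \ldots, n-1\}$, row $k$ contains only the single non-zero entry $d_k(I^*)$ on the diagonal, since $\delta_{k-1} = 0$ and $-\beta_k S_k^* = 0$. Iterated Laplace expansion along these rows factorizes the characteristic polynomial as
\[
p_0(z) = \prod_{k=1}^{n-1}\bigl(d_k(I^*) - z\bigr)\, q(z),
\]
where $q(z)$ is the cubic characteristic polynomial of the reduced $3 \times 3$ block
\[
{\bf J}_{\mathrm{red}} = \begin{pmatrix} d_0(I^*) & \omega_n & r - \beta_0 S_0^* \\ 0 & d_n(I^*) & -\beta_n S_n^* \\ \beta_0 I^* & \beta_n I^* & 0 \end{pmatrix},
\]
with the vanishing $(3,3)$-entry reflecting the identity $\sum_i \beta_i S_i^* = r+\mu$. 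The factors $d_k(I^*) = -(\omega_k + \mu + \beta_k I^*)$ are strictly negative, so they contribute $n-1$ eigenvalues strictly in the open left half-plane. For the cubic $q(z) = z^3 + a_2 z^2 + a_1 z + a_0$ I would verify the Routh--Hurwitz conditions $a_0, a_1, a_2 > 0$ and $a_1 a_2 > a_0$ directly, using the equilibrium relations $\beta_0 S_0^* + \beta_n S_n^* = r + \mu$, $(\mu+\beta_0 I^*)S_0^* = \omega_n S_n^* + rI^*$, and $(\omega_n + \mu + \beta_n I^*)S_n^* = \mu$ to reduce the coefficients to manifestly positive combinations.

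With all $n+2$ eigenvalues of the unperturbed Jacobian lying strictly in the open left half-plane, the perturbative step is standard: the endemic equilibrium depends continuously on $\delta$ by the fixed-point argument in the proof of Lemma \ref{lem:2} (indeed, $|I^*(\delta) - y_i| = O(\sqrt{\delta})$), so the entries of ${\bf J}$, and hence its spectrum, depend continuously on $\delta$. For $\delta$ sufficiently small the spectrum remains in the open left half-plane, and local asymptotic stability follows from the Hartman--Grobman theorem as invoked in Section \ref{sec3}. The main obstacle is verifying the Routh--Hurwitz inequalities for $q(z)$: while asymptotic stability of the endemic equilibrium in three-compartment SIRS-with-vaccination models is morally expected from the general theory, the computation requires careful bookkeeping, and identifying the right combination of equilibrium identities to expose positivity of $a_1 a_2 - a_0$ is where the real work lies.
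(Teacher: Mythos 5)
Your proposal follows the paper's route almost exactly: set $\delta=0$, deduce $S_i^*=0$ for $0<i<n$ from the middle equilibrium equations, factor the characteristic polynomial by Laplace expansion into $\prod_{k=1}^{n-1}(d_k(I^*)-z)$ (each root strictly negative since $d_k(I^*)=-(\omega_k+\mu+\beta_kI^*)<0$) times a cubic $3\times 3$ determinant in the $(S_0,S_n,I)$ block, establish left-half-plane spectrum for that block, and then perturb for small $\delta$. The one place you diverge is how the cubic is handled. The paper observes that the three column sums of the reduced $3\times3$ matrix are all $-\mu$ --- a remnant of the conservation law $\sum_i S_i + I = 1$ --- so $(-\mu - z)$ factors out of the determinant, leaving a quadratic $z^2 + a(\omega_n)z + b(\omega_n)$. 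Routh--Hurwitz for a quadratic is just $a>0$, $b>0$, which the paper checks in a few lines using the equilibrium identities and $\beta_n>\beta_0$. You instead propose verifying the full cubic Routh--Hurwitz conditions and correctly flag $a_1 a_2 > a_0$ as the crux. That inequality does hold, but the factorization makes it automatic: if the monic cubic factors as $(z+\mu)(z^2+az+b)$ with $a,b>0$, then $a_2=\mu+a$, $a_1=\mu a+b$, $a_0=\mu b$, and $a_1a_2-a_0=\mu^2 a+\mu a^2+ab>0$ with no bookkeeping. As written your proposal leaves exactly this decisive inequality undone (and you say so), so it is structurally correct but incomplete; spotting the $-\mu$ eigenvalue of the reduced block is the one idea you are missing, and it dissolves the obstacle you identify.
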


\begin{rem}
We note that for system \eqref{eq:3ca} $\omega_n=0$, in which case the condition above becomes $r+\mu<\beta_n$. Furthermore, given $\delta << 1$, for both the general model \eqref{eq:3c} and its specific cases \eqref{eq:3ca} and \eqref{eq:3aa}, the endemic equilibrium is asymptotically stable whenever it exists.
\end{rem}

\begin{rem}
Combining Theorems \ref{thm:DFE_ka}, \ref{thm:1}, and \ref{thm:endn}, we see that, when $\delta$ is sufficiently small, the stability condition for the DFE is consistent with the inexistence condition for the endemic equilibrium, and vice versa, for both the general model \eqref{eq:3c} and the specific cases \eqref{eq:3ca} and \eqref{eq:3aa}. I.e., if $\delta < < 1$ and the DFE is stable, no realistic (between $0$ and $1$) endemic equilibrium exists. If however the DFE is unstable, the endemic equilibrium exists and (by Theorem \ref{thm:endn}) is asymptotically stable. \end{rem}

\vspace{-.2cm}

\begin{proof}[Proof of Theorem \ref{thm:endn}]
We begin with the case when $\delta = 0$. Here  $S_i^*=0$ for $i\neq0,n$. Hence the Jacobian matrix ${\bf J}$ in \eqref{eq:2aa} reduces to
\[{\bf J} = 
{\begin{pmatrix}d_0(I^{*})&\omega_1&\dots   &\omega_{k-1}&\omega_k&\dots &\omega_{n-1}&\omega_{n}&r-\beta_0 S_0^{*} \\0 &d_1(I^{*})&\dots&0&0 &\dots &0 &0& 0 \\0& 0 &\dots &0&0 &\dots&0 &0 & 0  \\\vdots &\vdots &\ddots&\vdots&\vdots&\ddots&\vdots &\vdots & \vdots  \\0 &0&\dots & 0 &d_{k}(I^{*}) &\dots&0&0 & 0  \\\vdots &\vdots &\ddots&\vdots&\vdots&\ddots &\vdots  &\vdots &\vdots  \\0 &0 &\dots&0&0&\dots&d_{n-1}(I^{*})&0 & 0 \\0 &0 &\dots&0&0&\dots& 0 & d_n(I^{*}) &-\beta_n S_n^{*} \\\beta_0 I^{*} &\beta_1I^{*} &\dots   &\beta_{k-1}I^{*} &\beta_kI^{*} &\dots&\beta_{n-1}I^{*}&\beta_{n}I^{*} & 0  \end{pmatrix}},\]
where \eqref{eq:endco} has been used.

Evaluating $p(z)=\det\pa{{\bf J}-z\mathds{1}}$ by the second row, the resulting minor by the second row, etc., yields that $p(z)$ is equal to
\[\prod_{i=1}^{n-1}(-\mu-\beta_iI^* - w_i -z)\cdot \det{\begin{pmatrix}-\mu-\beta_0I^*-z&\omega_n&r-\beta_0 S_o^* \\0 &-\mu-\omega_n-\beta_n I^*-z&-\beta_n S_n^*\\\beta_0I^*&\beta_{n}I^*&-z\end{pmatrix}}.\]
The sum of all three columns in the above matrix is equal to $(-\mu-z)\begin{pmatrix}1&1&1\end{pmatrix}$. Hence, the determinant above coincides with 
\[(-\mu-z)\, \det{\begin{pmatrix}-\mu-\beta_0I^*-z&\omega_n&r-\beta_0 S_0^* \\0 &-\mu-\omega_n-\beta_n I^*-z&-\beta_n S_n^*\\1&1&1\end{pmatrix}}.\]
We now compute
\[ \det{\begin{pmatrix}-\mu-\beta_0I^*-z&\omega_n&r-\beta_0 S_0^* \\0 &-\mu-\omega_n-\beta_n I^*-z&-\beta_n S_n^*\\1&1&1\end{pmatrix}}=z^2+a(\omega_n)z+b(\omega_n),\]
where, using \eqref{eq:endco}, 
\[
\begin{aligned}
a(\omega_n)&=\omega_n+\pa{r-\beta_0 S_o^*-\beta_n S_n^*+\mu}+\pa{\beta_0I^*+\mu+\beta_n I^*}\\ &=\omega_n+\pa{\mu+\beta_0I^*+\beta_n I^*};\\
 b(\omega_n)&=\pa{\mu+\beta_0I^*+r-\beta_0 S_0^*-\beta_n S_n^*}\omega_n\\ 
&\quad +\pa{\mu+\beta_nI^*}\pa{\mu+\beta_0I^*+r-\beta_0 S_o^*}-\pa{\mu+\beta_0I^*}\beta_n S_n^*\\ &= \beta_0I^*\omega_n+\pa{\mu+\beta_nI^*}\pa{\beta_0I^*+\beta_n S_n^*}-\pa{\mu+\beta_0I^*}\beta_n S_n^*.
\end{aligned}
\]
Clearly, $\mu+\beta_0I^*+\beta_n I^*>0$. On the other hand, as  $\beta_n>\beta_0$, we get
\begin{multline*}
\pa{\mu+\beta_nI^*}\pa{\beta_0I^*+\beta_n S_n^*}-\pa{\mu+\beta_0I^*}\beta_n S_n^*\\ >\pa{\mu+\beta_nI^*}\beta_n S_n^*-\pa{\mu+\beta_0I^*}\beta_n S_n^*\ge0.\end{multline*}

Thus, the roots of $z^2+a(\omega_n)z+b(\omega_n)$ have negative real parts for all values of $\omega_n \ge 0$. Hence, by standard perturbation theory (see, e.g., \cite{Kato}), the eigenvalues of  $V$ have negative real parts for $\delta$ sufficiently small and the (realistic) endemic equilibrium is asymptotically stable by the Hartman-Grobman theorem for system \eqref{eq:3c}, and is therefore asymptotically stable for \eqref{eq:3ca} and \eqref{eq:3aa} as well. The assertion now follows from Theorem \ref{thm:1}. 
\end{proof}

\begin{rem}
It is also useful to consider the case $\omega = \delta$ (in which the rate to $S_0$ for any compartment $S_k$, $k > 0$, becomes $\delta p_k$). In this case, for sufficiently small $\delta$, the condition for the existence and stability of the endemic equilibrium for \eqref{eq:3c} and \eqref{eq:3aa} reduces to $\beta_{n} > r + \mu$ (and in particular becomes equivalent to that of \eqref{eq:3ca}). 
Additionally, the bounds are slightly altered: The relation \eqref{eq:Adiff} becomes $\norm{{\bf A}_\delta - {\bf A}_0}\le \delta\sqrt{2(n+1)}$, while the relation in \eqref{eq:A_0norm} becomes $\norm{{\bf A}_0^{-1}}\le \frac{1}{\beta_0I^*+\mu}$. The remaining results are unchanged.

\end{rem}
\section{Epidemiological Findings and Numerical Analysis} \label{Epidem}

In this section, we consider numerically-generated dynamical behavior associated with \eqref{eq:3c} and provide a comparison between the latter  and publicly available data for pertussis incidence in Canada, \cite{Public}. In order to work with relatively constant, homogenous demography rates, we restrict our population to consist only of individuals in the $10$ years old and above age group. Our simulation encompasses a four-compartmental ($n = 2$) system associated with \eqref{eq:3c}.  Values for the parameters $\beta_2, r, \mu, \delta$ are initially set to coincide with those in \cite{OSBPR}, with the exception of $\mu$, which is altered to fit the age restriction. Original values for $ \beta_1$ are estimated from \cite{Schwartz}, by scaling $\beta_2$ with a constant $\gamma \approx 0.635$ derived from the percent efficacy of pertussis vaccines after $5$ years, respectively (since we have fixed the total time needed to pass from $S_0$ to $S_2$ at $10$ years as per \cite{OSBPR}, $\delta = \frac{1}{5}$, and so individuals reach $S_1$ in $5$ years after vaccination). Finally, we select the vaccination scheme where the coverage coefficients $p_1, p_2$ equal $p_1 = 0.2$ and $p_2 = 0.62$.

Our simulation is arranged to run from $1801$ to $2020$. Time-series pertussis data, see Figure \ref{fig4a}, is retrieved from the database of the Public Health Agency of Canada, \cite{Public}, where for each year between $1991$ and $2020$ we have divided the sum total of pertussis cases among individuals ten and older by the population of Canada in that year.

It should be noted that $\beta_1$, among other parameter values, is derived from clinical data sources rather than a population data source. Disparities when extrapolating to the population scale may skew results. Furthermore, our simulated population is not a closed one by definition, and our models consider only those susceptible-infectious interactions that occur within the population. As a result, the simulations below are primarily theoretical representations of the model's behavior at relevant pertussis-based parameter tuples. Prior to contrasting our numerical results with real-world ones, we first consider the effect of varying the rates $\beta_0$, $\delta$, and $\omega$.

We first vary $\beta_0$, having set $\delta$ as before and allowed $\omega$ to be relatively large ($\omega = 20$ years$^{-1}$). This yields Figure \ref{fig2a} below. As illustrated on the latter, unless the initial vaccine effectiveness is very high, the system's dynamics will approach an endemic equilibrium even for high vaccination rates. (In particular, the system attains the DFE only if $\beta_0 < 9$, equivalent to an initial vaccine effectiveness of $ > 96.5 \%$.)  
 
We now set $\beta_0 = 9$ and vary $\omega$, which encompasses both the rate at which individual vaccination produce immunity and the rate at which individuals falling under the vaccine coverage are immunized. As shown on Figure \ref{fig2b}  prevalence peaks at an overwhelming $I = 0.37$ within a month for all $\omega \in [5, 30]$ considered. For lower $\omega \le 19$, prevalence steadies between a proportion of $0.003$ and a proportion of $0.03$ infectious individuals in the population. When $\omega$ is approximately $19$ years$^{-1}$, the solution's trajectory no longer approaches an endemic equilibrium, but rather attains the disease-free state (when prevalence is rounded to the nearest $10^{-10}$), see Figure \ref{fig2c}. Fixing $\omega$ at $20$ years$^{-1}$, we now vary $\delta$. On Figures \ref{fig3a}--\ref{fig3b} below, we illustrate the eventual behavior of the disease for $\delta \in [0.002, 0.266]$ and $\delta \in [0.3, 5.1]$. An apparent transcritical bifurcation occurs at $\delta \approx 0.21$, with the disease-free equilibrium losing stability.

\begin{figure}[h!]
\begin{center}
\hfill 
\subfigure{\label{fig2a}\includegraphics[width=7.3cm]{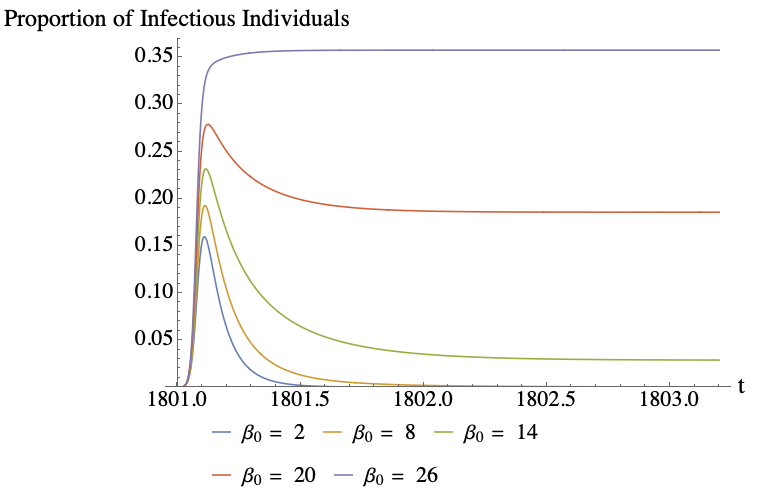}}
\subfigure{\label{fig2b}\includegraphics[width=7.9cm]{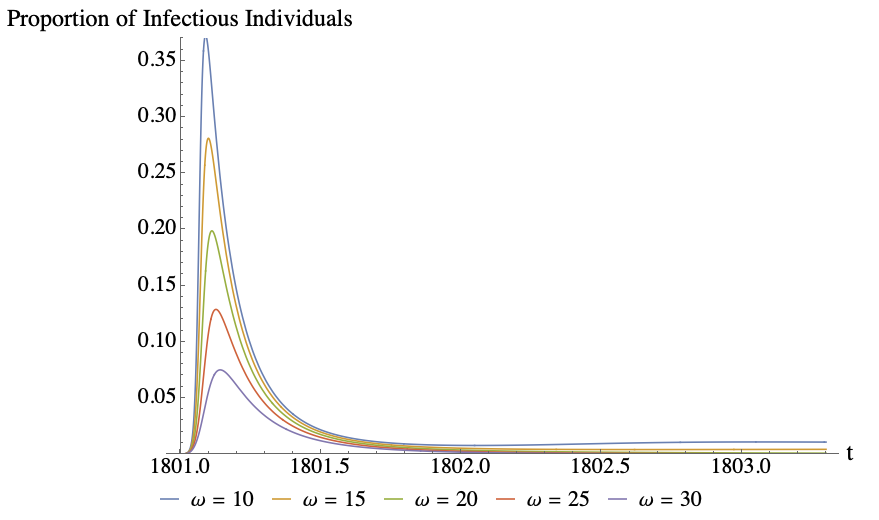}}
\subfigure{\label{fig2c}\includegraphics[width=7.9cm]{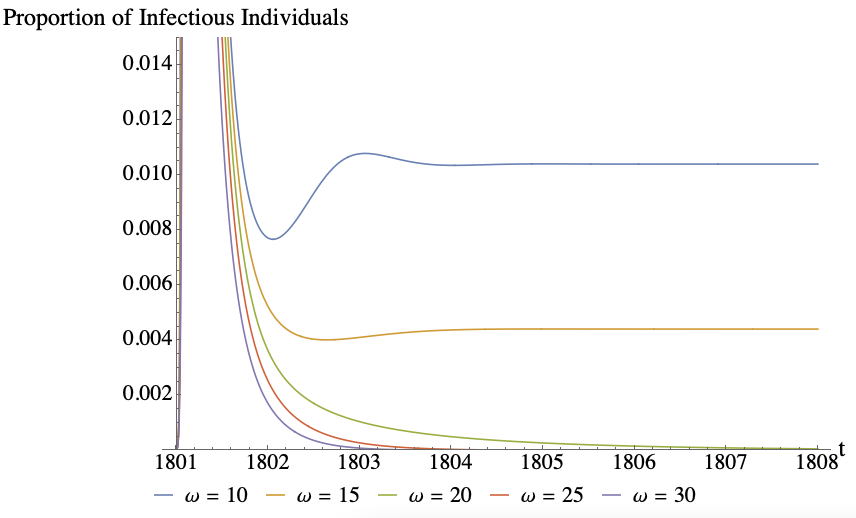}}

\end{center}
\caption{(a) Disease prevalence between 1801 and 1803 for various values of $\beta_0$. (b) Disease prevalence between 1801 and 1803 for various values of $\omega$. (c) Eventual behavior of the disease for the same values of $\omega$.  All figures were produced with Wolfram Mathematica.}
\end{figure}
No Hopf bifurcations and regions of (positive) endemic equilibrium instability are visible for low-to-high $\delta \in [\frac{1}{500}, 500]$ (we recall our theoretical result that none exist for $\delta$ sufficiently small in Section \ref{sec5}). Explicitly calculating the characteristic polynomial of the Jacobian matrix $J$ in \eqref{eq:2aa} at the endemic equilibrium (for all $\delta > 0$) at small values of $n$, we verified that the coefficients of the characteristic polynomial exhibit no sign changes given the relation $\beta_0 \le \beta_1 \le \ldots \le \beta_n$. Thus, for small $n$, an endemic equilibrium is stable for all positive $\delta$ whenever it exists. We conjecture that this behavior holds true for general values of $n$.  
\begin{figure}[h!]

\hfill 
\subfigure{\label{fig3a}\includegraphics[width=7cm]{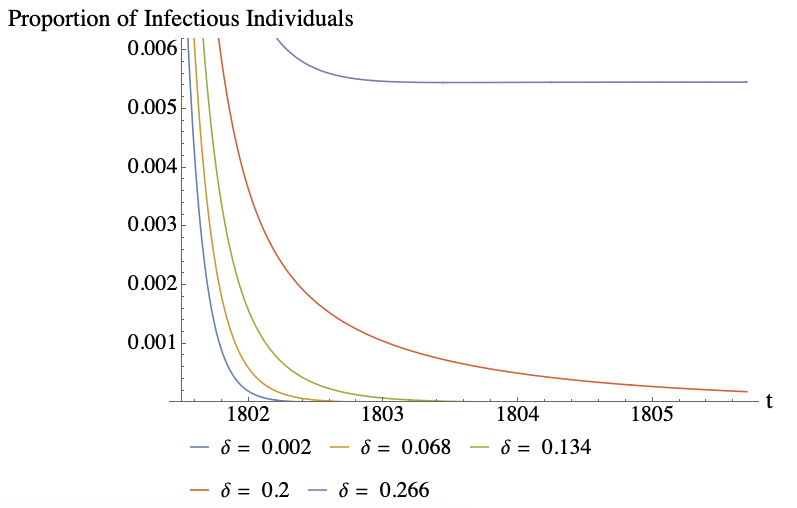}}
\subfigure{\label{fig3b}\includegraphics[width=7cm]{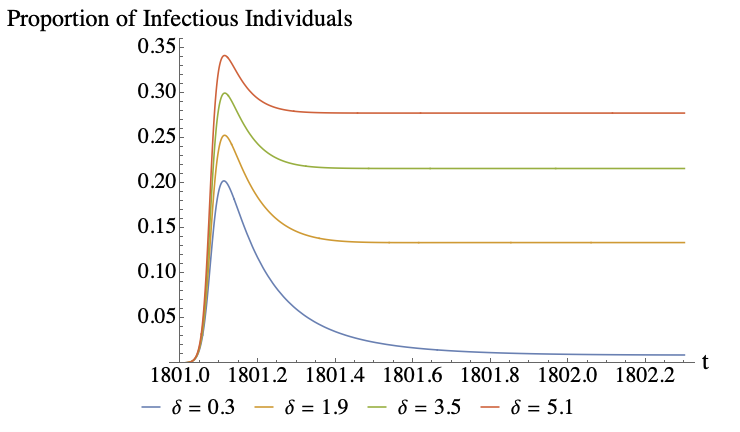}}

\caption{(a) Later behavior of the disease for $\delta \in [0.002, 0.266]$. (b) Later behavior of the disease for $\delta \in [0.3, 5.1]$.}\label{fig3}
\end{figure}
\begin{figure}[h!]
\hfill 
\subfigure{\label{fig4a}\includegraphics[width=6.9cm]{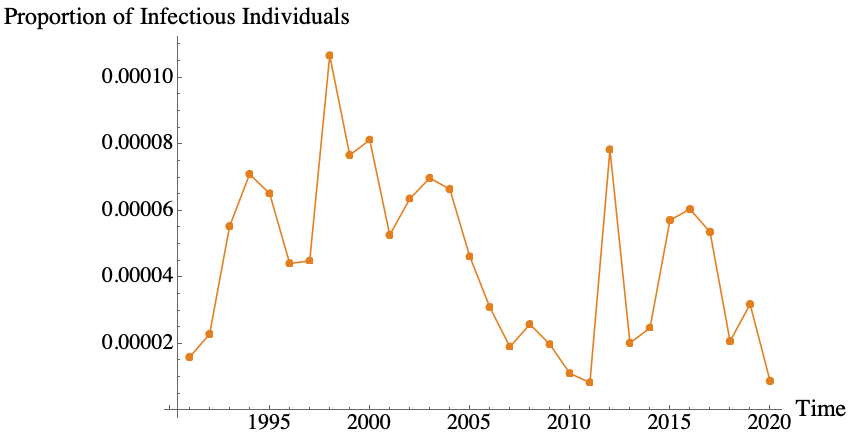}}
\subfigure{\label{fig4b}\includegraphics[width=6.9cm]{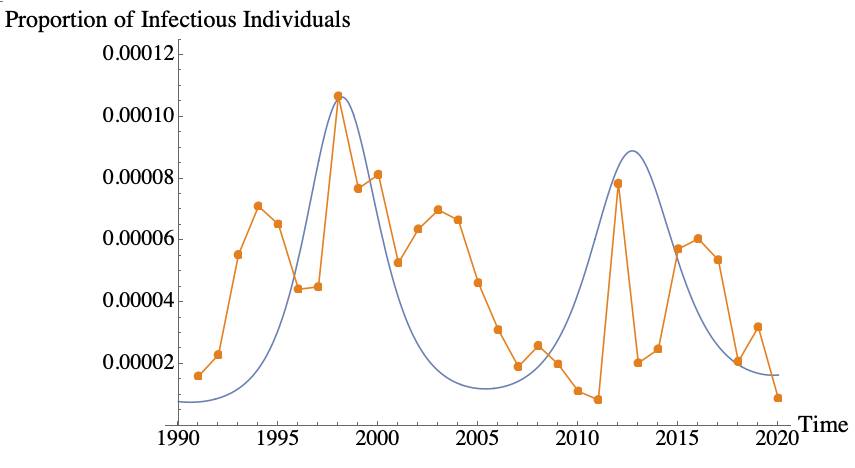}}
\subfigure{\label{fig4c}\includegraphics[width = 9cm]{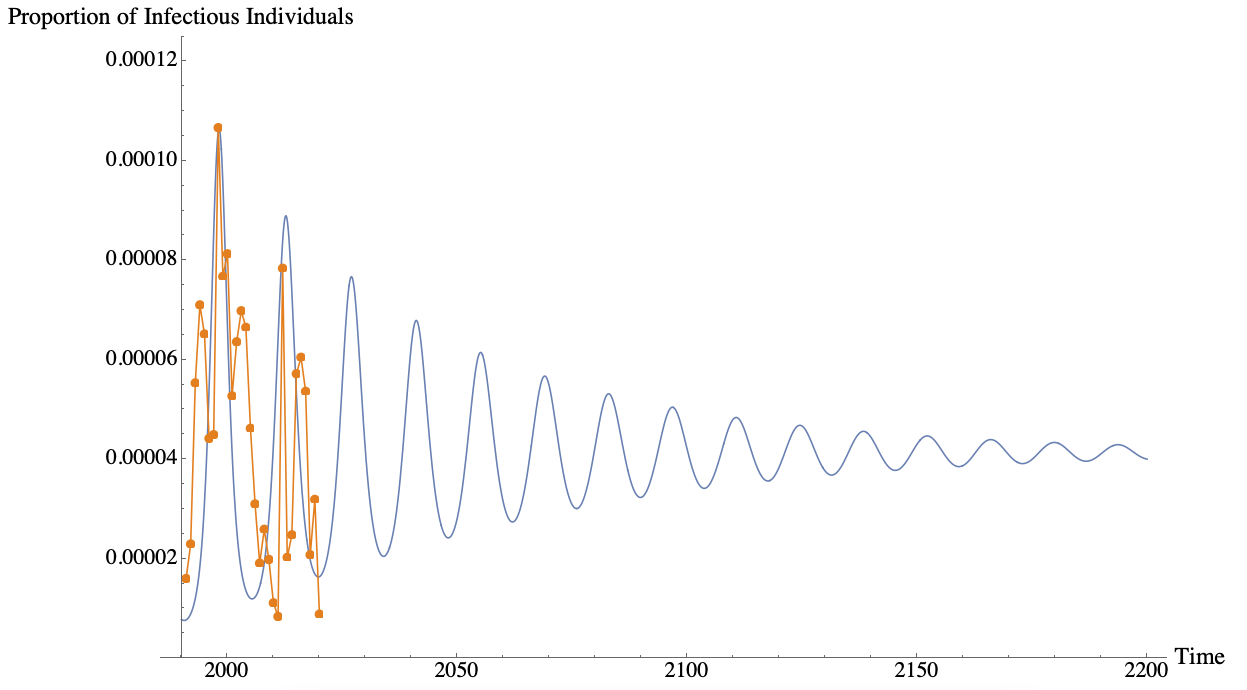}}\caption{(a) The time-series data for pertussis among individuals ten and older in Canada between 1990 and 2020. (b) Our numerical simulation (in blue) fitted to the time-series data. (c) Our numerical simulation extended for another two hundred years.}
\end{figure}

We now consider values for the parameters that fit our numerical simulation to the aforementioned time-series data. Though for lower realistic values of $\delta, \omega$ one sees oscillations in disease prevalence similar to those exhibited in the time-series, the system's dynamics when $\omega > 1, \delta > 1/10$ either approach the DFE or tend towards an endemic equilibrium significantly greater than the values of $I$ reflected in the time-series data (e.g., $I(1806) \approx 0.0041$ in Figure \ref{fig2c} above compared to $I(2011) \approx 0.00008$ in the time-series data). Furthermore, prevalence in the first few months of the epidemic is unrealistically high. 

To able to fit the numerics to the time-series data, we alter the above parameters, many of which are forced to become biologically-unrealistic. $\omega$, $\mu$, and $\delta$, for example, both fall very low (e.g., $\omega \approx 0.0378$), while others (i.e., $\beta_0, \beta_1, \beta_2$) are increased by a factor of $1.8$ from their previous values. This yields Figure \ref{fig4b} above. In this case, continuing the simulation further yields that oscillations gradually dampen until the system becomes arbitrarily close to an endemic equilibrium, with $I^{*} \approx 0.0000414$, or $0.00414\%$ of the total population.

\section{Conclusion}\label{sec7}

In this paper, we develop a perturbative approach to  determine the location, uniqueness, and stability for the endemic equilibria of a waning immunity model \eqref{eq:3c} as well as its particular cases \eqref{eq:3ca} and \eqref{eq:3aa}. 

We also identify an optimal booster vaccine distribution scheme within our model framework, evaluating analytically which of the vaccination methods featured is able to both minimize $R_0$ and maximize time between successive immunizations. 

In particular, $R_0$ in the case when all compartments other than $S_n$ have access to immunizations is independent of the values of $\omega$ and $\{p_i\}$, and is equal to $\frac{\beta_{n}}{r + \mu}$ regardless of whether any compartments are vaccinated. It should be noted here that, for many diseases, transmission rates for fully-susceptible individuals are large enough to far exceed the recovery and birth rates together (the pertussis parameters mentioned in \cite{OSBPR} and \cite{Lavine}, for example, exhibit this relation). This leads to the $R_0$ for \eqref{eq:3ca} exceeding $1$, and the disease-free equilibrium being unstable for this vaccination scheme. On the other hand, when only members of $S_n$ are vaccinated, the $R_0$ decreases to an $\omega_{n}$-dependent value, which could conceivably be brought below $1$ for sufficiently high $\omega$ and $p_n$. 

The implication is that any scheme of the form \eqref{eq:3ca} is not an effective choice for vaccination, and that booster vaccination methods should devote resources to vaccinating fully-susceptible individuals.

When considering possible applications of this research, there are two main directions in which it can be applied: Methods such as ours work to extend the range of infectious disease models that can be explicitly analyzed, by enabling the analytical and stable numerical study of endemic behavior in many-compartment waning immunity models, among others. Additionally, the perturbation theory utilized is sufficiently versatile and robust to be applicable to a a more general setting than the one used in this work, whose primary goal is to demonstrate the method's efficiency. 

We, however, expect that this method in general may break down when the rate of waning immunity becomes large.

\appendix
\section{Completing the argument for Theorem \ref{thm:inf_free}}\label{sec:app}
Denoting ${\bf  S}(t):=(S_0(t),\ldots,S_n(t))$, ${\bf  S}_o:=(s_0,\ldots,s_n)$, we have 
\be
\label{eq:vecS}
{\bf  S}^\intercal(t)=\exp\pa{{\bf J}t}({\bf  S}_o)^\intercal+\mu\exp\pa{{\bf J}t}\int_0^t\exp\pa{-{\bf J}s}{\bf e}_n^\intercal ds,
\ee

Since ${\bf J}$ is a lower triangular matrix, its eigenvalues are its diagonal entries, i.e., its spectrum $\sigma({\bf J})=\{-\delta-\mu,-\mu\}$. In particular, the asymptotic behavior of $\exp\pa{{\bf J} t}$ as $t\to\infty$ is completely determined by the largest, simple eigenvalue $-\mu$ with the eigenprojection
 \[{\bf P}_\mu=  {\begin{pmatrix}0&0&\dots   &0  \\\vdots &\vdots&\ddots &\vdots  \\0 &0&\dots &0   \\1 &1 &\dots&1 \end{pmatrix}},\]

\noindent in the sense that 
 \be\label{eq:limfr}
 \lim_{t\to\infty}\mathrm{e}^{\kappa t}\norm{\pa{\mathrm{e}^{\mu t}\exp\pa{{\bf J} t}-P_\mu}}=0
 \ee
 for any $\kappa<\delta$. 
  Using \eqref{eq:vecS}, $\pa{\mathds{1}-{\bf P}_\mu }{\bf P}_\mu=0$, and ${\bf P}_\mu\exp\pa{{\bf J} t}=\mathrm{e}^{-\mu t}{\bf P}_\mu$, we get 

 \be
 \begin{aligned}
{\bf  S}^\intercal(t)-{\bf e}_n^\intercal 
& = {\bf P}_\mu \pa{{\bf  S}^\intercal(t)-{\bf e}_n^\intercal}+\pa{\mathds{1} - {\bf P}_\mu } \pa{{\bf  S}^\intercal(t)-{\bf e}_n^\intercal}\\
& =\mathrm{e}^{-\mu t} {\bf P}_\mu{\bf  S}^\intercal_o-\mathrm{e}^{-\mu t}{\bf e}_n^\intercal+\pa{\mathds{1} - {\bf P}_\mu} \exp\pa{{\bf J} t}{\bf  S}_o^\intercal.
\end{aligned}
 \ee 
 
 Hence, decomposing
 \[\pa{\mathds{1}-{\bf P}_\mu } \exp\pa{ {\bf J} t}{\bf  S}_o^\intercal=\mathrm{e}^{-\mu t}\pa{\mathds{1}-{\bf P}_\mu } \pa{\mathrm{e}^{\mu t}\exp\pa{{\bf J} t}-{\bf P}_\mu}{\bf  S}_o^\intercal\]
and using \eqref{eq:limfr} and $\norm{\mathds{1}-{\bf P}_\mu }=\sqrt{n + 1}$, we can deduce 
\be\label{eq:ret}
 \begin{aligned}
\lim_{t\to\infty}&\norm{\mathrm{e}^{\kappa t}\pa{{\bf  S}^\intercal(t)-{\bf e}_n^\intercal}}\le \lim_{t\to\infty}\mathrm{e}^{-\pa{\mu-\kappa} t}\\&\times\pa{\norm{{\bf P}_\mu{\bf  S}_o^\intercal}+1+\sqrt{n + 1}\norm{\mathrm{e}^{\mu t}\exp\pa{{\bf J} t} - {\bf P}_\mu}\norm{{\bf S}_o^\intercal}}=0
\end{aligned}
\ee
for $\kappa<\mu$.  

This is expected, as ${\bf e}_n$ is a unique equilibrium for \eqref{eq:2free}. It can also be seen from \eqref{eq:ret} that the decay to the equilibrium occurs exponentially fast. 
\qed

\vspace{1cm}

\acknowledgement{
We thank Dr. Lauren Childs for originally suggesting the problem, as well as for her very helpful discussions and feedback.}

\printbibliography

\end{document}